 \newcommand{\dtv}[2]{d_{TV}\left(#1,#2\right)}
 \newcommand{\whp}{with high probability}
 \newcommand{\V}{\mathcal{V}}
\newcommand{\W}{\mathcal{W}}
 \newcommand{\Po}[1]{\textrm{Po}\left(#1\right)}
 \newcommand{\Bin}[2]{\textrm{Bin}\left(#1,#2\right)}
 \newcommand{\E}{\mathbb{E}}
\newcommand{\coupling}{\preceq}
\newcommand{\coup}{\preceq_{1-o(1)}}
\newcommand{\Pra}[1]{\Pr\left\{#1\right\}}
\newcommand{\Gn}[1]{G\left(n,#1\right)}
\newcommand{\Gnm}[1]{\mathcal{G}\left(n,m,#1\right)}
\newcommand{\G}[1]{\mathcal{G}_*\left(n,#1\right)}
\newtheorem{tw}{Theorem}
\newtheorem{lem}{Lemma}
\newtheorem{fact}{Fact}
\theoremstyle{definition}
\newtheorem{rem}{Remark}
\newtheorem{conj}{Conjecture}
\theoremstyle{plain}
\begin{document}

\author{Katarzyna Rybarczyk$^*$}
\title{Sharp threshold functions for the random intersection graph via coupling method.}
\date{}
\maketitle
\begin{center}
$^*$Faculty of Mathematics and Computer
Science, Adam Mickiewicz University,\\ 60--769 Pozna\'n, Poland
\end{center}

\begin{abstract}
We will present a new method, which enables us to find threshold functions for many properties in random intersection graphs. This method will be used to establish sharp threshold functions in random intersection graphs for $k$--connectivity, perfect matching containment and Hamilton cycle containment. 
\end{abstract}

{\bfseries keywords:} random intersection graph, threshold functions, connectivity, Hamilton cycle, perfect matching, coupling

\section{Introduction}\label{Introduction}

In a random intersection graph edges represent relations between feature sets randomly attributed to vertices. More precisely, in a random intersection graph each vertex $v$ from the vertex set $\V$ ($|\V|=n$) is assigned independently a subset of features $W_v\subseteq\W$ from the auxiliary set of features $\W$ ($|\W|=m$) according to a given probability distribution. Two vertices  $v_1$, $v_2$ are adjacent in a random intersection graph if and only if
$W_{v_1}$ and  $W_{v_2}$ intersect. Such a general model of the random intersection graph was introduced in \cite{RIGGodehardt1}. In the article we will concentrate on analysing the properties of the most widely studied random intersection graph $\Gnm{p}$, in which for any vertex $v\in\V$ each feature $w\in\W$ is added  to $W_v$ independently with probability $p$ 
(i.e. $\Pr\{w\in W_v\}=p$). The $\Gnm{p}$ model was introduced in \cite{GpSubgraph,SingerPhD}. We will also make a standard assumption that the number of vertices and the number of features are in the relation $m=n^{\alpha}$, where $\alpha$ is a positive constant. We should mention here that, to some extent, the results obtained in the article may be generalised to other random intersection graph models due to the equivalence theorems proved in Section~4 in \cite{WSNphase2}.

The difference between $\Gn{\hat{p}}$, in which each edge appears independently with probability $\hat{p}$,  and $\Gnm{p}$ is unquestionable. This is caused by the dependencies of edge appearance in the latter one. However
known results suggest that there is some relation between threshold functions of $\Gnm{p}$ and  $\Gn{\hat{p}}$, while $\hat{p}$ is approximately $mp^2$ and $\alpha$ is large enough (i.e. $m$ is large comparing to $n$). As the example we may state the equivalence theorem from~\cite{GpEquivalence}, according to which $\Gnm{p}$ and $\Gn{\hat{p}}$ have asymptotically the same properties as $m=n^{\alpha}$ for $\alpha>6$ and  $\hat{p}$ is specially defined function depending on $p$.  In~\cite{GpEquivalence2} it is shown that the equivalence theorem from \cite{GpEquivalence} is true also for $\alpha\ge 3$ in  the case of monotone properties. However any equivalence theorem in such general form is false for $\alpha$ smaller than $3$. This is caused by the excess of the number of cliques in $\Gnm{p}$ comparing to the number of cliques in $\Gn{\hat{p}}$ (see \cite{GpSubgraph,GpPoissonCliques}). However the values of the threshold functions for connectivity \cite{SingerPhD} and phase transition \cite{GpPhaseTransition} suggest that some comparison is still possible for $\alpha>1$. 

In this article we will introduce a new technique basing on coupling, which shows the relation between $\Gn{\hat{p}}$ and $\Gnm{p}$ models for all values of $\alpha$. We will use it to give an alternative short proof of the connectivity theorem shown in~\cite{SingerPhD}. From the proof it will clearly follow, why the threshold functions for $\alpha>1$ coincide. We will also use the technique to prove new results concerning sharp threshold functions for  Hamilton cycle containment, perfect matching containment and $k$-connectivity. All of these graph properties  follow the so called 'minimum degree phenomenon' in $\Gn{\hat{p}}$. This means that, with probability tending to $1$ as $n$ tends to infinity, the properties hold in $\Gn{\hat{p}}$ as soon as their necessary minimum degree condition is satisfied. In fact we will show that the 'minimum degree phenomenon' holds also in the case of $\Gnm{p}$ for $\alpha>1$. 

In the proof we will find the value of $\hat{p}$ and a coupling $(\Gn{\hat{p}},\Gnm{p})$ such that $\Gn{\hat{p}}\subseteq \Gnm{p}$ with probability tending to one as $n$ tends to infinity. Then we will use the coupling to bound the values of the threshold functions in $\Gnm{p}$ of the graph properties mentioned above  and we will prove that this values coincide with the values of the threshold functions of the minimum degree condition.
Our work is partially inspired by the result of Efthymiou and Spirakis~\cite{SpirakisHamiltonCycles}. However the method differs much from this used in~\cite{SpirakisHamiltonCycles} and therefore it enables us to obtain much sharper threshold functions in the case of the Hamilton cycle containment property then those from~\cite{SpirakisHamiltonCycles}. 
We should mention here that the method is strong enough to give some partial results on threshold function of other properties of $\Gnm{p}$. However we present here as an example these graph properties for which the threshold functions obtained by coupling method are tight.

The article is organised as follows. First, in Section~\ref{SectionGnp}, we present the well known results on threshold functions in $\Gn{\hat{p}}$. They will be useful later in the proof, however they are stated first as the comparison to the main results. In Section~\ref{SectionResult} we present the main theorems and outline their proof. Sections~\ref{SectionMinimumDegree} and \ref{SectionCoupling} are to give the details of the reasoning.

All limits in the paper are taken as $n\rightarrow \infty$.
Throughout the paper we will use the notation
$a_n=o(b_n)$
if $a_n/b_n\to 0$. Also by $\Bin{n}{p}$ and $\Po{\lambda}$ we will denote the binomial distribution with parameters $n$, $p$ and Poisson distribution with expected value $\lambda$, respectively. Moreover if a random variable $X$ is stochastically dominated by $Y$ we will write $X\prec Y$. We will also use the phrase '{\whp}' to say with probability tending to one as $n$ tends to infinity.
 
\section{Threshold functions in $\Gn{\hat{p}}$}\label{SectionGnp}

In this section we present classical results concerning threshold functions  of the properties, which follow the 'minimum degree phenomenon' in $\Gn{\hat{p}}$. They will be used in the proof of analogous theorems concerning $\Gnm{p}$.

\begin{tw}[Erd\H{o}s and R\'{e}nyi~\cite{Erdos1} see also Bollob\'{a}s and Thomason~\cite{GnpPrzeglad}]\ \\
Let
$$
\hat{p}=\frac{\ln n + \omega}{n}.
$$ 
\begin{itemize}
 \item[(i)] If $\omega\to -\infty$, then {\whp} $\Gn{\hat{p}}$ is disconnected.
\item[(ii)] If $\omega\to \infty$, then {\whp} $\Gn{\hat{p}}$ is connected.
\end{itemize}
\end{tw}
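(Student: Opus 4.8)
The plan is to reduce both parts to the number of isolated vertices, whose disappearance is the genuine bottleneck for connectivity of $\Gn{\hat p}$. Let $X_0$ denote the number of isolated vertices. Since a fixed vertex is isolated exactly when all $n-1$ potential edges at it are absent, $\E X_0 = n(1-\hat p)^{n-1}$, and since $\hat p \to 0$ in the relevant regimes, $\ln\E X_0 = \ln n + (n-1)\ln(1-\hat p) = -\omega + o(1)$, so that $\E X_0 \sim e^{-\omega}$. Hence $\E X_0 \to \infty$ in case (i) and $\E X_0 \to 0$ in case (ii), and this dichotomy drives the whole argument. For part (ii) I would record at the outset that connectedness is a monotone increasing property and $\Gn{\hat p}$ is stochastically monotone in $\hat p$: replacing $\omega$ by $\min\{\omega,\ln\ln n\}$ only decreases $\hat p$, so it suffices to prove (ii) under the convenient extra assumption $\omega \le \ln\ln n$, which keeps $\hat p$ close to the threshold.

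For part (i) I would run the second moment method on $X_0$. A direct count shows that two fixed vertices are simultaneously isolated precisely when a prescribed set of $2n-3$ potential edges is absent, so $\E[X_0(X_0-1)] = n(n-1)(1-\hat p)^{2n-3}$. Dividing by $(\E X_0)^2 = n^2(1-\hat p)^{2n-2}$ gives $\frac{n-1}{n(1-\hat p)} \to 1$, whence $\E[X_0^2]/(\E X_0)^2 \to 1$ once the lower order term $1/\E X_0 \to 0$ is added. Thus $\mathrm{Var}(X_0) = o((\E X_0)^2)$, and Chebyshev's inequality yields $\Pra{X_0 = 0} \le \mathrm{Var}(X_0)/(\E X_0)^2 \to 0$. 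Since a graph with an isolated vertex is disconnected, {\whp} $\Gn{\hat p}$ is disconnected, which proves (i).

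For part (ii) I would first remove isolated vertices by the first moment method: $\E X_0 \le n\,e^{-\hat p(n-1)} \to 0$, so by Markov's inequality {\whp} the minimum degree is at least $1$. It then remains to rule out components of size $2 \le k \le n/2$. If $\Gn{\hat p}$ is disconnected, the vertex set $S$ of a smallest component satisfies $|S| = k \le n/2$ and sends no edge to its complement, an event of probability $(1-\hat p)^{k(n-k)}$; a union bound over all candidate sets gives
\[
\Pra{\Gn{\hat p}\text{ disconnected and }\delta(\Gn{\hat p})\ge 1}\ \le\ \sum_{k=2}^{\lfloor n/2\rfloor}\binom{n}{k}(1-\hat p)^{k(n-k)},
\]
so that the whole of part (ii) reduces to showing this sum is $o(1)$.

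I expect the estimation of this sum to be the main obstacle, since no single crude bound succeeds uniformly in $k$. The remedy is to split the range at $k = n/\ln n$ and to use $\binom{n}{k}\le (en/k)^k$ and $(1-\hat p)^{k(n-k)}\le e^{-\hat p k(n-k)}$ throughout. In the small range $2 \le k \le n/\ln n$ one has $n-k = n(1-o(1))$, so the loss $(\ln n+\omega)k/n$ is only $O(1)$ and $\hat p(n-k)\ge \ln n+\omega-C_0$; each term is then at most $(Ce^{-\omega}/k)^k$ for absolute constants $C_0,C$, and summing over $k$ gives at most $\sum_{k\ge 2}(Ce^{-\omega})^k = o(1)$. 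In the large range $n/\ln n < k \le n/2$ one uses $n/k\le \ln n$ and $n-k\ge n/2$ to obtain $\ln\!\big(\binom{n}{k}(1-\hat p)^{k(n-k)}\big)\le k(1+\ln\ln n)-\tfrac12 k\ln n\le -\tfrac14 k\ln n\le -n/4$, so this part of the sum is at most $n\,e^{-n/4}=o(1)$. The one point that must not be mishandled is precisely the small range: replacing $n-k$ by the uniform bound $n/2$ there would discard a factor of order $n^{k/2}$ that the binomial coefficient cannot compensate, so the exact exponent $k(n-k)$ has to be retained. Combining the two ranges shows the sum is $o(1)$, hence {\whp} $\Gn{\hat p}$ is connected, completing (ii).
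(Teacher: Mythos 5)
Your proof is correct, but there is no ``paper proof'' to compare it against: the paper states this theorem in Section~\ref{SectionGnp} purely as classical background, attributing it to Erd\H{o}s--R\'enyi and Bollob\'as--Thomason, and never proves it --- it is consumed later only as the input to the coupling argument (Lemma~\ref{LematCoupling} together with Fact~\ref{FaktCouplingWlasnosci}), which transfers part (ii) to $\Gnm{p}$. What you have written is the standard self-contained proof of the classical result, and its steps check out: the second-moment computation $\E[X_0(X_0-1)]=n(n-1)(1-\hat p)^{2n-3}$ is exact, and since $\hat p\le \ln n/n\to 0$ and $\E X_0\to\infty$ in case (i), Chebyshev gives disconnection; the reduction of part (ii) to $\omega\le\ln\ln n$ via monotonicity is legitimate (connectivity is increasing and $\Gn{\hat p}$ is stochastically monotone in $\hat p$, exactly the content of the paper's Facts~\ref{FaktCouplingGnp} and~\ref{FaktCouplingWlasnosci}); and the union bound over separating sets, split at $k=n/\ln n$, is handled correctly --- in particular you are right that retaining the exact exponent $k(n-k)$ in the small range is the crux, since the crude substitution $n-k\ge n/2$ there throws away a factor of order $n^{k/2}$ that $\binom{n}{k}\le (en/k)^k$ cannot compensate. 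The only cosmetic point worth adding is the degenerate regime $\omega\le -\ln n$ in part (i), where $\hat p\le 0$ and the graph is empty, hence trivially disconnected; for $-\ln n<\omega\to-\infty$ one automatically has $|\omega|\le\ln n$, so your asymptotic $\E X_0\sim e^{-\omega}$ and the variance estimate go through without any truncation.
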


\begin{tw}[Erd\H{o}s and R\'{e}nyi~\cite{GnpPerfectMatching} see also Bollob\'{a}s and Thomason~\cite{GnpPrzeglad}]\ \\
Let $n$ be even and
$$
\hat{p}=\frac{\ln n + \omega}{n}.
$$ 
\begin{itemize}
 \item[(i)] If $\omega\to -\infty$, then {\whp} $\Gn{\hat{p}}$ does not contain a perfect matching.
\item[(ii)] If $\omega\to \infty$, then {\whp} $\Gn{\hat{p}}$ contains a perfect matching.
\end{itemize}
\end{tw}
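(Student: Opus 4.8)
The guiding principle is that, exactly as for connectivity (Theorem~1), the decisive obstruction to a perfect matching at this density is an \emph{isolated vertex}: a perfect matching cannot exist while some vertex has degree $0$, and conversely I will argue that once the minimum degree is at least $1$ no other obstruction survives. Both parts therefore revolve around the number of isolated vertices
$$
X_0=\bigl|\{v\in\V:\deg_{\Gn{\hat{p}}}(v)=0\}\bigr|,\qquad \E X_0=n(1-\hat{p})^{n-1}=(1+o(1))e^{-\omega},
$$
the final estimate coming from $\hat{p}=(\ln n+\omega)/n$ and $(1-\hat p)^{n-1}=(1+o(1))e^{-\hat p n}$.

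For part~(i) I would run the second moment method on $X_0$. When $\omega\to-\infty$ we have $\E X_0\to\infty$. A direct computation gives $\E[X_0(X_0-1)]=n(n-1)(1-\hat p)^{2n-3}$, hence $\mathrm{Var}(X_0)=\E X_0+\frac{n\hat p-1}{n(1-\hat p)}(\E X_0)^2$; since $n\hat p-1=(1+o(1))\ln n$ the second term is $o\bigl((\E X_0)^2\bigr)$, and the first is $o\bigl((\E X_0)^2\bigr)$ because $\E X_0\to\infty$. Chebyshev's inequality then gives $X_0\ge1$ \whp, and any isolated vertex forbids a perfect matching, proving~(i).

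For part~(ii), when $\omega\to\infty$ Markov's inequality applied to $\E X_0=(1+o(1))e^{-\omega}\to0$ shows that \whp\ $\Gn{\hat p}$ has minimum degree at least $1$, while Theorem~1(ii) shows it is connected; in particular, since $n$ is even, $\Gn{\hat p}$ has no odd component, so the empty set is not an obstruction. These facts are necessary but not sufficient, so the heart of the matter is to exclude the remaining obstructions. I would invoke Tutte's theorem: if $\Gn{\hat p}$ has no perfect matching then there is a set $S\subseteq\V$ with $q(\Gn{\hat p}-S)\ge|S|+2$, where $q(\cdot)$ counts odd components, and by the above $|S|=s\ge1$. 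Choosing one vertex in each of $s+2$ distinct odd components yields a witness whose representatives have all of their out-of-component neighbours inside $S$. The plan is to bound, by a union bound over $s$ and over the sizes of the smallest odd components, the expected number of such witnesses and to show that it tends to $0$.

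The main obstacle is exactly this union bound, a competition between the entropy $\binom{n}{s}$ of choosing $S$ (and the representatives) and the probability that all the forbidden edges are absent. The most delicate small case is $s=1$: a cut vertex $v$ two of whose components are single vertices, i.e.\ a vertex with two neighbours of degree $1$; the expected number of such configurations is of order $n^{3}\hat p^{2}(1-\hat p)^{2n}=O\bigl(n^{-1}(\ln n)^2e^{-2\omega}\bigr)\to0$, the decisive gain over the isolated-vertex count being the extra factor $e^{-\omega}$. For large $s$ the number of absent edges forced by the witness grows linearly in $s$, so the probability decays faster than $\binom{n}{s}$ grows; the genuinely delicate regime is the intermediate one, where the exponent $\hat p\cdot(\text{number of forbidden pairs})$ must be balanced against $s\ln n$. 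This is the same expansion estimate that drives the connectivity proof of Theorem~1, which explains why the two thresholds coincide at $(\ln n+\omega)/n$.
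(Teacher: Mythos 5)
You are attempting to prove a statement that the paper itself never proves: Theorem 2 is quoted in Section~\ref{SectionGnp} as a classical result of Erd\H{o}s and R\'enyi, with references, and is used downstream as a black box. So your proposal must stand on its own, and it is measured against the known proofs in the literature. Part (i) does stand: the second-moment computation for isolated vertices is correct and complete (your variance identity is exact, and the only missing remark is the routine truncation/monotonicity step for $\omega\to-\infty$ faster than $-\ln n$, where $\hat p$ would be negative).

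Part (ii), however, has a genuine gap: everything after ``I would invoke Tutte's theorem'' is a plan rather than a proof, and the plan as literally stated fails. Your witness is a pair consisting of $S$, $|S|=s$, and $s+2$ representatives whose out-of-component neighbours all lie in $S$; no edges \emph{into} $S$ are required to be present. The expected number of such witnesses does not tend to $0$ in the essential regime $\omega=o(\ln n)$ (larger $\omega$ reduces to this case by monotonicity): already for $s=2$ with four singleton components it is of order $\binom{n}{2}\binom{n-2}{4}(1-\hat p)^{4n-O(1)}\asymp n^{6}\left(n^{-1}e^{-\omega}\right)^{4}=n^{2}e^{-4\omega}\to\infty$. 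The reason is that the first moment is inflated by rare events: on the event that four (nearly) isolated vertices exist, which has probability roughly $e^{-4\omega}$, \emph{every} one of the $\binom{n}{2}$ sets $S$ is a witness. So ``bound the expected number of witnesses'' cannot close with this notion of witness. The standard repair --- which is the actual content of the Erd\H{o}s--R\'enyi proof --- is to make the witness rigid: either count only witnesses compatible with $\delta\ge 1$, so that every singleton component is forced to send an edge into $S$ (your $s=1$ computation secretly does exactly this; that is where your factor $\hat p^{2}$ comes from), or pass to a minimal/Gallai--Edmonds set $S$ in which every vertex of $S$ sees at least three odd components, and only then run the union bound over all $s\le n/2$, over component sizes (small components handled by forced edges, components of size at least $3$ by boundary non-edges), and over the regime $s=\Theta(n)$ where the approximation $(1-\hat p)^{n-s}\approx n^{-1}e^{-\omega}$ breaks down. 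That case analysis is several pages in Erd\H{o}s--R\'enyi or in Bollob\'as's book, and it is precisely what is missing. Relatedly, your closing claim that this is ``the same expansion estimate that drives the connectivity proof'' undersells the difficulty: connectivity needs a union bound over single sets with empty edge-boundary, whereas here the witness is a pair (a set $S$ plus a family of components) with forced edges, which is why the matching threshold was historically a separate and substantially harder result than connectivity.
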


\begin{tw}[Erd\H{o}s and R\'{e}nyi~\cite{GnpKConnectivity} see also Bollob\'{a}s and Thomason~\cite{GnpPrzeglad}]\ \\
Let $k\ge 1$ and
$$
\hat{p}_k=\frac{\ln n + (k-1)\ln \ln n + \omega}{n}.
$$ 
\begin{itemize}
 \item[(i)] If $\omega\to -\infty$, then {\whp} $\Gn{\hat{p}_k}$ is not $k$-connected.
\item[(ii)] If $\omega\to \infty$, then {\whp} $\Gn{\hat{p}_k}$ is $k$-connected.
\end{itemize}
\end{tw}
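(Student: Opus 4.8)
The plan is to reduce $k$-connectivity to the minimum degree condition $\delta\ge k$, exploiting the elementary fact that a vertex of degree at most $k-1$ can be cut off from the rest of the graph by deleting its at most $k-1$ neighbours, so that $\delta\ge k$ is necessary for $k$-connectivity; this is exactly the 'minimum degree phenomenon' to be verified here. For part (i) I would therefore exhibit, \whp, a vertex of degree exactly $k-1$. Letting $X_{k-1}$ count such vertices, we have $\E X_{k-1}=n\,\Pra{\Bin{n-1}{\hat p_k}=k-1}$, and a routine Poisson approximation of the binomial gives $\E X_{k-1}\sim e^{-\omega}/(k-1)!$, which tends to infinity when $\omega\to-\infty$. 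A second moment computation, using that the degrees of two fixed vertices are almost independent, shows $\mathrm{Var}\,X_{k-1}=o\big((\E X_{k-1})^2\big)$, so Chebyshev's inequality yields $X_{k-1}>0$ \whp\ and hence $\Gn{\hat p_k}$ is not $k$-connected. (For $k=1$ this recovers the first theorem via isolated vertices.)

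For part (ii) the first step is the converse minimum degree estimate. Let $Y$ be the number of vertices of degree at most $k-1$; the same approximation gives $\E Y\sim e^{-\omega}/(k-1)!\to 0$ when $\omega\to\infty$, the term coming from degree exactly $k-1$ being dominant. Here the summand $(k-1)\ln\ln n$ in $\hat p_k$ is precisely what cancels the factor $(\ln n)^{k-1}$ produced by $\binom{n-1}{k-1}\hat p_k^{k-1}$, which is what makes the threshold sharp. Markov's inequality then gives $\delta\ge k$ \whp.

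The second and harder step is to rule out small separators. If $\Gn{\hat p_k}$ has $\delta\ge k$ but fails to be $k$-connected, there is a set $S$ with $|S|\le k-1$ whose removal disconnects the graph; taking the smallest component $A$ of $G-S$ we get a set $A$ with $|A|\ge 2$ (forced by $\delta\ge k$, since every vertex of $A$ has all its neighbours in $A\cup S$) whose external neighbourhood obeys $|N(A)\setminus A|\le k-1$, i.e.\ at most $k-1$ vertices outside $A$ send an edge into $A$. I would bound the probability of the existence of such an $A$ by a union bound over its size $a=|A|$: for fixed $A$ the number of outside vertices joined to $A$ is distributed as $\Bin{n-a}{1-(1-\hat p_k)^a}$, so the contribution of size $a$ is $\binom{n}{a}$ times $\Pra{\Bin{n-a}{1-(1-\hat p_k)^a}\le k-1}$.

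The main obstacle is controlling this sum across all ranges of $a$. For small $a$ the dominant case $a=2$ already reproduces the threshold calculation, giving a per-term bound of order $(\ln n)^{-(k-1)}e^{-a\omega}$, which is summable and tends to $0$ exactly when $\omega\to\infty$; this is the delicate regime where the precise form of $\hat p_k$ is essential. For $a$ a constant fraction of $n$ the external neighbourhood is essentially all of $\V\setminus A$, so the deviation probability is super-exponentially small and defeats the crude bound $\binom{n}{a}\le 2^n$. The genuinely awkward part is the intermediate range $a=\Theta(n/\ln n)$, where both the number of choices of $A$ and the lower-tail probability are exponential in $n$; there I would apply the sharp Chernoff bound for the lower tail of $\Bin{n-a}{q}$ with $q=1-(1-\hat p_k)^a$ bounded away from $0$, and verify that $\binom{n}{a}$ times this probability still decays. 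Summing the three regimes shows that \whp\ no such separator exists, which combined with $\delta\ge k$ establishes $k$-connectivity.
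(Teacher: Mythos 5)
You should know at the outset that the paper itself never proves this theorem: it is quoted in Section~\ref{SectionGnp} as a classical result of Erd\H{o}s and R\'enyi (with references to the literature) and enters the paper only as a black box, fed into the coupling Lemma~\ref{LematCoupling} to transfer $k$-connectivity to $\Gnm{p}$. So the only meaningful comparison is between your blind reconstruction and the classical argument, and your reconstruction is exactly that standard argument, correct in outline. Part (i) by the second moment method applied to the count of vertices of degree exactly $k-1$ (this incidentally parallels the proof the paper does give of its Lemma~\ref{LemmaMinDegree}(i), the analogous minimum-degree statement for $\Gnm{p}$), and part (ii) by the first moment for $\delta\ge k$ plus a union bound over sets $A$ with $2\le |A|=a\le n/2$ having at most $k-1$ external neighbours. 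Your three regimes in part (ii) do all close. For $2\le a\le \ln n$ the dominant term of $\Pra{\Bin{n-a}{q_a}\le k-1}$, where $q_a=1-(1-\hat p_k)^a$, gives per-term bounds of order $\frac{a^{k-1}}{a!\,(k-1)!}(\ln n)^{-(a-1)(k-1)}e^{-a\omega}$ (slightly better than the $(\ln n)^{-(k-1)}e^{-a\omega}$ you quote), summing to $O(e^{-2\omega})=o(1)$. For $\ln n\le a\le n/2$ the crude bound $\Pra{\Bin{n-a}{q_a}\le k-1}\le k\,n^{k-1}(1-\hat p_k)^{a(n-a-k+1)}$ yields a logarithm of the $a$-th term at most $-a\left(\ln a-1-\tfrac{(a+k-1)\ln n}{n}\right)+(k-1)\ln n+\ln k$; the bracket is concave in $a$ and positive (of size at least about $\ln\ln n-1$) at both endpoints, hence positive throughout, so the whole middle-and-large range decays uniformly and there is in fact no genuine obstruction at $a=\Theta(n/\ln n)$.

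Two technical points you should make explicit to have a complete proof. First, the asymptotics $\E X_{k-1}\sim e^{-\omega}/(k-1)!$ (and likewise $\E Y\to 0$) are valid only when $|\omega|$ grows slowly, say $\omega=o(\ln n)$; for faster $\omega$ you must first truncate $\omega$, using that $k$-connectivity is an increasing property together with the monotone coupling $\Gn{\hat p}\coupling\Gn{\hat p'}$ for $\hat p\le \hat p'$ (Fact~\ref{FaktCouplingGnp}); the paper performs exactly this truncation step in its proof of Lemma~\ref{LemmaMinDegree}. Second, in the reduction to the set $A$ you implicitly use that $A$ is the smallest component of $G-S$, which is what guarantees $a\le (n-|S|)/2$; note also that your union bound never needs $A$ to induce a connected subgraph, which keeps the computation simpler than some textbook versions.
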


\begin{tw}[Koml\'{o}s and Szem\'{e}redi~\cite{GnpHamilton1} and Bollob\'{a}s~\cite{GnpHamilton2}]
Let
$$
\hat{p}=\frac{\ln n + \ln \ln n + \omega}{n}.
$$ 
\begin{itemize}
 \item[(i)] If $\omega\to -\infty$, then {\whp} $\Gn{\hat{p}}$ does not contain a Hamilton cycle.
 \item[(ii)] If $\omega\to \infty$, then {\whp} $\Gn{\hat{p}}$ contains a Hamilton cycle.
\end{itemize}
\end{tw}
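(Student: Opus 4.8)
The plan is to treat the two directions separately, exploiting the fact that the given threshold coincides with the one at which the minimum degree of $\Gn{\hat{p}}$ jumps from $1$ to $2$; this is the combinatorial heart of the statement, since a Hamilton cycle forces every vertex to have degree at least $2$. For part (i) it therefore suffices to exhibit, \whp, a vertex of degree at most $1$. Let $X$ count the vertices of degree at most $1$. A direct first moment computation gives $\E X\sim e^{-\omega}$: the dominant contribution comes from vertices of degree exactly $1$, for which $n(n-1)\hat{p}(1-\hat{p})^{n-2}\sim e^{-\omega}$, whereas the expected number of isolated vertices, $n(1-\hat{p})^{n-1}$, is smaller by a factor of order $\ln n$. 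Hence $\E X\to\infty$ when $\omega\to-\infty$. The degrees of two distinct vertices are almost independent, since they share only the single potential edge joining them, so a routine second moment estimate gives $\mathrm{Var}(X)=o\bigl((\E X)^2\bigr)$; Chebyshev's inequality then yields $X\ge 1$ \whp, and the graph is not Hamiltonian.

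For part (ii), the substantial direction, I would use P\'{o}sa's rotation-extension technique driven by the expansion of $\Gn{\hat{p}}$. First I would record the structural properties that hold \whp\ when $\omega\to\infty$: (a) the minimum degree is at least $2$, since we sit above the corresponding threshold; and (b) no small vertex set spans too many edges, so that for a suitable constant $\beta>0$ every set $S$ with $1\le|S|\le\beta n$ satisfies the P\'{o}sa expansion bound $|N(S)|\ge 2|S|$. Both rest on the sparse, locally tree-like nature of $\Gn{\hat{p}}$ near the threshold. Granting them, I would take a longest path $P$ and perform rotations from one endpoint; P\'{o}sa's lemma, fed by the expansion condition, guarantees that the set $R$ of vertices attainable as a new endpoint through sequences of rotations has linear size. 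Each such endpoint contributes a large family of booster non-edges whose presence would lengthen $P$ or close it into a cycle, giving of order $|R|^2$ candidate pairs; since each is present as an edge independently with probability $\hat{p}$, \whp\ at least one booster occurs. Iterating the rotate-extend-close step until the path spans $\V$ produces a Hamilton cycle \whp.

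The main obstacle is pushing part (ii) down to the sharp threshold. Precisely at density $(\ln n+\ln\ln n+\omega)/n$ the graph carries many vertices of degree $2$ or $3$, and such vertices expand poorly, so establishing (b) and, above all, keeping $R$ of linear size throughout the rotations demands careful control of how many low-degree vertices can obstruct the expansion. Quantifying this, and showing that conditionally on the minimum degree being at least $2$ enough independent randomness survives to supply the boosters, is the technical crux; it is exactly here that the arguments of Koml\'{o}s--Szem\'{e}redi and of Bollob\'{a}s must be carried out with full care. Alternatively, one may deduce the statement from the stronger hitting-time result that in the random graph process the hitting times for Hamiltonicity and for minimum degree $2$ coincide \whp, but the direct rotation-extension proof is the one I would present.
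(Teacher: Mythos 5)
First, a point of orientation: the paper does not prove this statement at all --- it appears in Section~\ref{SectionGnp} as quoted classical background, attributed to Koml\'os--Szemer\'edi and Bollob\'as, and is consumed as a black box by the coupling lemma (Lemma~\ref{LematCoupling}). So your proposal must be judged as a self-contained proof of the classical theorem. Judged that way, part (i) is essentially correct: the first-moment computation $\E X\sim e^{-\omega}$ for vertices of degree at most $1$ is right, the second-moment step is routine, and the only detail you omit is the standard truncation (one may assume, say, $\omega=o(\ln\ln n)$, since for faster $\omega\to-\infty$ the graph can be coupled as a subgraph of the truncated one and non-Hamiltonicity is inherited by subgraphs).

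Part (ii), however, contains two genuine gaps, both of which you name yourself and neither of which you resolve. (1) The booster step as written is not valid: the $\Theta(|R|^2)$ candidate pairs produced by the rotations are a function of the edge set of the graph, so their status has in effect already been exposed, and the assertion that ``each is present as an edge independently with probability $\hat{p}$'' is false as stated. The usual repair --- sprinkling, i.e. splitting $\hat{p}=\hat{p}_1+\hat{p}_2$ and reserving the second round for boosters --- is exactly what is unavailable at the sharp threshold, since $\hat{p}_1$ would then lie below the minimum-degree-$2$ threshold; Bollob\'as's and Koml\'os--Szemer\'edi's arguments exist precisely to get around this. (2) Your expansion property (b) is borderline at this density: when $\omega\to\infty$ slowly (say $\omega=o(\ln\ln n)$), {\whp} there are $\Theta(e^{-\omega}\ln n)$ vertices of degree exactly $2$, so a singleton $S$ already gives $|N(S)|=2|S|$ with equality, and (b) can only hold in the slack-free form $|N(S)\setminus S|\ge 2|S|$. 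Proving even that form requires showing that low-degree vertices are pairwise far apart and lie on no short cycles, and then running the rotations so that these vertices never destroy the expansion --- which is precisely the ``careful control'' you defer to the cited authors. Since these two points constitute essentially all of the difficulty of part (ii), what you have is an accurate roadmap of the known proof rather than a proof: the remaining steps you do spell out are the routine ones.
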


\section{Result}\label{SectionResult}

We will show that, to some extent, $\Gnm{p}$ follows the 'minimum degree phenomenon'. This fact will be used to indicate threshold functions for $k$--connectivity, perfect matching containment and Hamilton cycle containment. 

The first of the presented results, Theorem~\ref{TheoremConnectivity}, was obtained in \cite{SingerPhD}. However we state it here, since our coupling method shortens the proof.

\begin{tw}\label{TheoremConnectivity}
Let $m=n^{\alpha}$ and
$$
p_1=
\begin{cases}
\frac{\ln n + \omega}{m},&\text{ for } \alpha\le 1;\\ 
\sqrt{\frac{\ln n + \omega}{nm}},&\text{ for } \alpha > 1.\\ 
\end{cases}
$$
\begin{itemize}
 \item[(i)] If $\omega\to -\infty$, then {\whp} $\Gnm{p_1}$ is disconnected.
\item[(ii)] If $\omega\to \infty$, then {\whp} $\Gnm{p_1}$ is connected.
\end{itemize}
\end{tw}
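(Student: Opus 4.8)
The plan is to treat the two implications separately and, within each, to distinguish the ranges $\alpha\le 1$ and $\alpha>1$, since the mechanism producing (dis)connectivity is different in the two regimes. For part (i) I would argue directly through the isolated vertices. Let $X_0$ be the number of vertices with no neighbour; conditioning on the feature set $W_v$ and using that the feature sets are independent across vertices gives
$$
\Pra{v\text{ is isolated}}=\bigl(1-p_1\bigl(1-(1-p_1)^{n-1}\bigr)\bigr)^{m}.
$$
I would then check that this equals $(1+o(1))e^{-\omega}/n$ in \emph{both} regimes. For $\alpha\le 1$ one has $np_1\to\infty$, so the bracket is essentially $(1-p_1)^m\approx e^{-mp_1}$, i.e.\ isolation is governed by the event $W_v=\emptyset$; for $\alpha>1$ one has $np_1\to 0$, so the bracket is $(1-np_1^2)^m\approx e^{-nmp_1^2}$. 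In either case $\E X_0=(1+o(1))e^{-\omega}\to\infty$ when $\omega\to-\infty$. A second moment computation then finishes part (i): for $\alpha\le 1$ the events $\{W_v=\emptyset\}$ are genuinely independent, and for $\alpha>1$ the joint isolation probability of two vertices exceeds the square of the single one only by a factor $1+o(1)$, so $\E X_0^2=(1+o(1))(\E X_0)^2$ and Chebyshev gives $X_0\ge 1$ \whp.

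For part (ii) I would use the coupling advertised in the introduction. The goal is to produce a value $\hat p$ with $n\hat p=\ln n+\omega'$ and $\omega'\to\infty$, together with a coupling under which $\Gn{\hat p}\coup\Gnm{p_1}$; granting this, the Erd\H{o}s--R\'{e}nyi connectivity theorem stated in Section~\ref{SectionGnp} makes $\Gn{\hat p}$ connected \whp, and since this connected graph is a spanning subgraph of $\Gnm{p_1}$, the latter is connected \whp\ as well. The natural calibration is to match the isolated-vertex threshold computed above with the connectivity threshold of $\Gn{\hat p}$: because $\Pra{v\text{ isolated in }\Gnm{p_1}}=(1+o(1))e^{-\omega}/n$ while $\Pra{v\text{ isolated in }\Gn{\hat p}}\approx e^{-n\hat p}$, one can afford any $\hat p$ with $n\hat p\le\ln n+\omega-g(n)$ for a sufficiently slowly growing $g$ and still keep $\omega'=\omega-g(n)\to\infty$. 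I note that the reverse coupling is unsuitable for part (i): the excess cliques of $\Gnm{p_1}$ prevent its domination by a sparse $\Gn{\hat p}$, which is exactly why the direct isolated-vertex count is used there instead.

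The heart of the matter, and the step I expect to be the main obstacle, is constructing the coupling $\Gn{\hat p}\coup\Gnm{p_1}$, because the edges of $\Gnm{p_1}$ are not independent. I would work from the bipartite description, with the vertices $\V$ on one side and the features $\W$ on the other and each incidence present independently with probability $p_1$, and build the independent edges of $\Gn{\hat p}$ out of the features while controlling the dependencies. The cleanest route is to pass through the auxiliary model $\G{\cdot}$ in which the numbers of features are replaced by independent Poisson variables, so that the relevant edge indicators decouple; one then establishes $\Gn{\hat p}\coup\G{\cdot}\coup\Gnm{p_1}$ and removes the Poissonisation with a total variation estimate of the form $\dtv{\cdot}{\cdot}=o(1)$. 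Isolating features that lie in exactly one relevant pair of vertices yields genuinely independent edge events and is what ultimately produces a bona fide $\Gn{\hat p}$ inside $\Gnm{p_1}$; the delicate point is that this extraction must lose no more than a lower-order term in the exponent, so that the surviving edge probability still corresponds to $n\hat p=\ln n+\omega'$ with $\omega'\to\infty$. This calibration has to be verified in both ranges of $\alpha$, and it is the regime $\alpha>1$, where the edge density of $\Gnm{p_1}$ is itself only of order $\ln n/n$, that leaves the least room and demands the most care. Once the coupling lemma is in place, the theorem follows at once by the argument of the preceding paragraph.
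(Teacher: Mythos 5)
Your part (i) is sound and is in substance the paper's own argument: for $\alpha>1$ your second moment computation on isolated vertices is exactly Lemma~\ref{LemmaMinDegree} specialized to $k=1$ (the event $Z_v=0$ there is precisely isolation of $v$, and the paper's proof is the same computation organized by conditioning on $|W_v|$), while for $\alpha\le 1$ your empty-feature-set argument is the content of Lemma~\ref{LemmaStopienMaleAlfa}, which the paper quotes from Singer's thesis. Your strategy for part (ii) --- construct a coupling $\Gn{\hat p}\coup\Gnm{p_1}$ with $n\hat p=\ln n+\omega'$, $\omega'\to\infty$, then invoke the Erd\H{o}s--R\'enyi theorem together with the transfer of increasing properties (Fact~\ref{FaktCouplingWlasnosci}) --- is also the paper's (Lemma~\ref{LematCoupling}).

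The gap is in the one step you yourself flag as the heart of the matter. The mechanism you propose for manufacturing independent edges, ``isolating features that lie in exactly one relevant pair of vertices,'' works only when $np_1\to 0$, i.e.\ for $\alpha>1$. For $\alpha\le 1$ one has $np_1\ge \ln n+\omega\to\infty$, so a typical feature is chosen by about $np_1\gg 1$ vertices, and features chosen by exactly two vertices are negligible: their expected number is $m\binom{n}{2}p_1^2(1-p_1)^{n-2}$, which equals $(1+o(1))\tfrac12(\ln n+\omega)^2e^{-\omega}$ at $\alpha=1$ and is super-polynomially small for $\alpha<1$; in either case it is $o(n)$, so the extracted graph {\whp} still has isolated vertices and cannot contain a connected $\Gn{\hat p}$, which requires $\Theta(n\ln n)$ edges. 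What the paper does instead in this regime (CASE 2 of the proof of Lemma~\ref{LematCoupling}) is extract from every feature $w$ with $X_w$ incident vertices a collection of $\lfloor X_w/2\rfloor$ uniformly sampled pairs, i.e.\ couple each clique $\mathcal{G}[V_w]$ so that it contains $\G{\lfloor X_w/2\rfloor}$; summing over features, Poissonizing, and using the total variation bound then yields $\Gn{\hat p_-}$ with $\hat p_-=\frac{mp_1}{n}\left(1-o(1)\right)$, hence $n\hat p_-=\ln n+\omega-O(1)$, still above the threshold. Note that this also reverses your assessment of which regime is delicate: for $\alpha>1$ the pair extraction retains essentially all of the edge density $mp_1^2$, whereas for $\alpha\le 1$ one must discard all but a matching inside each large clique, and it is only because $mp_1=\ln n+\omega$ that the surviving density lands exactly at the connectivity threshold.
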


\noindent To the best of our knowledge, the following results (i.e. Theorems~\ref{TheoremMatching}, \ref{TheoremAlphaGe1} and~\ref{TheoremAlphaLe1})  were not known before.

\begin{tw}\label{TheoremMatching}
Let $n$ be even, $m=n^{\alpha}$ and
$$
p_1=
\begin{cases}
\frac{\ln n + \omega}{m},&\text{ for } \alpha\le 1;\\ 
\sqrt{\frac{\ln n + \omega}{nm}},&\text{ for } \alpha > 1.\\ 
\end{cases}
$$
\begin{itemize}
 \item[(i)] If $\omega\to -\infty$, then {\whp} $\Gnm{p_1}$ does not contain a perfect matching.
\item[(ii)] If $\omega\to \infty$, then {\whp} $\Gnm{p_1}$ contains a perfect matching.
\end{itemize}
\end{tw}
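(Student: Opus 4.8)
The plan is to run the argument in close parallel with the proof of Theorem~\ref{TheoremConnectivity}, exploiting two facts: containment of a perfect matching is a monotone increasing graph property, and its necessary minimum degree condition (no isolated vertex) is exactly the one governing connectivity. Since the threshold $p_1$ is verbatim the one of Theorem~\ref{TheoremConnectivity}, the same choice of $\hat p$ and the same coupling should do the work, the only change being that at the end I would invoke Theorem~2 (perfect matchings in $\Gn{\hat p}$) in place of the connectivity theorem.

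For part (i), with $\omega\to-\infty$, I would show that \whp\ $\Gnm{p_1}$ contains an isolated vertex, which immediately precludes a perfect matching. This is precisely the estimate carried out in Section~\ref{SectionMinimumDegree} for the connectivity lower bound: applying the first and second moment method to the number $X_0$ of isolated vertices, one checks that $\E X_0\to\infty$ and $\mathrm{Var}\,X_0=o((\E X_0)^2)$, so that $X_0>0$ \whp. For $\alpha\le 1$ the dominant contribution comes from vertices with empty feature set (whose expected number is $\approx e^{-\omega}$), while for $\alpha>1$ a vertex is isolated exactly when all of its features are private to it; in both regimes the expected count scales like $e^{-\omega}$ and the computation is the one already needed for Theorem~\ref{TheoremConnectivity}(i).

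For part (ii), with $\omega\to\infty$, I would use the coupling established in Section~\ref{SectionCoupling}: for a suitable $\hat p$ there is a joint construction of $\Gn{\hat p}$ and $\Gnm{p_1}$ under which $\Gn{\hat p}\subseteq\Gnm{p_1}$ \whp. One then takes $\hat p=(\ln n+\omega')/n$ with $\omega'\to\infty$ and checks consistency with the given $p_1$: for $\alpha>1$ the natural scale is $\hat p\approx m p_1^2=(\ln n+\omega)/n$, so $\omega'\to\infty$ follows directly, whereas for $\alpha\le 1$ the graph is even denser and the coupling delivers a $\hat p$ comfortably above the threshold. By Theorem~2 the graph $\Gn{\hat p}$ contains a perfect matching \whp, and since this property is preserved under the addition of edges, the inclusion $\Gn{\hat p}\subseteq\Gnm{p_1}$ transfers it to $\Gnm{p_1}$ \whp.

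The main obstacle is the coupling itself, and above all its tightness. The $o(1)$ error in $\Gn{\hat p}\subseteq\Gnm{p_1}$ must cost no more than a lower order term in $\hat p$, since otherwise the sharp constant hidden in $\ln n+\omega$ would be blurred and the threshold shifted, so one needs $\omega'=\omega-o(1)$ rather than merely $\omega'=\omega-o(\ln n)$. The delicate case is $\alpha=1$, where the feature-induced cliques have size $\Theta(\ln n)$ and the edge events are genuinely dependent, so that extracting an independent $\Gn{\hat p}$ of the correct density requires care; the regimes $\alpha<1$ (dense, dominated by large cliques) and $\alpha>1$ (edges almost independent, $\hat p\approx m p_1^2$) are more forgiving. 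Once the coupling lemma is proved with this precision — which is the heart of Section~\ref{SectionCoupling} and is shared with Theorem~\ref{TheoremConnectivity} — the perfect matching statement follows with essentially no extra work beyond substituting Theorem~2 for the connectivity theorem.
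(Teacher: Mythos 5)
Your proposal is correct and is essentially the paper's own proof: part (i) via the minimum-degree obstruction established by a second-moment count of isolated vertices (the paper's Lemma~\ref{LemmaMinDegree} with $k=1$ for $\alpha>1$, and Lemma~\ref{LemmaStopienMaleAlfa}, quoted from Singer's thesis, for $\alpha\le 1$), and part (ii) via the coupling Lemma~\ref{LematCoupling} together with the Erd\H{o}s--R\'enyi perfect matching theorem and the transfer of increasing properties (Fact~\ref{FaktCouplingWlasnosci}).

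One point in your closing discussion is wrong, though it does not derail the argument because you ultimately defer to the coupling lemma: for $\alpha\le 1$ the coupling does \emph{not} deliver a $\hat p$ ``comfortably above the threshold,'' and that regime is not more forgiving. Since each feature clique on $X_w$ vertices can only be charged for $\lfloor X_w/2\rfloor$ independently placed edges, the extracted density is $\hat p_-\approx mp_1/n=(\ln n+\omega)/n$ (CASE~2 of Lemma~\ref{LematCoupling}), i.e.\ sitting exactly at the matching threshold of $\Gn{\hat p}$ with $\omega''=\omega-O(1)$; the naive density $mp_1^2$, which for $\alpha<1$ really would be far above threshold, is unattainable by any such coupling --- otherwise $\Gnm{p_1}$ with $\omega\to-\infty$ would inherit connectivity from $\Gn{\hat p}$, contradicting part (i). So the second-order precision you correctly identify as the crux is needed equally in both regimes, not just near $\alpha=1$.
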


\begin{tw}\label{TheoremAlphaGe1}
Let $k\ge 1$ be a constant, $\alpha>1$, $m=n^{\alpha}$ and
$$
p_k=
\sqrt{\frac{\ln n + (k-1)\ln \ln n + \omega}{mn}}.
$$
\begin{itemize}
\item[(i)] If $\omega\to -\infty$, then {\whp} $\Gnm{p_k}$ is not $k$-connected.
\item[(ii)] If $\omega\to \infty$, then {\whp} $\Gnm{p_k}$ is $k$-connected.
\item[(i')] If $\omega\to -\infty$, then {\whp} $\Gnm{p_2}$ does not contain a Hamilton cycle.
\item[(ii')] If $\omega\to \infty$, then {\whp} $\Gnm{p_2}$ contains a Hamilton cycle.
\end{itemize}
\end{tw}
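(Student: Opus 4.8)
The plan is to notice that the stated density satisfies $mnp_k^2=\ln n+(k-1)\ln\ln n+\omega$, so that the natural comparison edge density $\hat p_k:=mp_k^2=\frac{\ln n+(k-1)\ln\ln n+\omega}{n}$ sits exactly at the $\Gn{\hat p_k}$ threshold for $k$-connectivity, and for $k=2$ at the Hamiltonicity threshold, recorded in Section~\ref{SectionGnp}. I would split the argument along the monotone directions. The positive statements (ii) and (ii') I would obtain by pushing the $\Gn{\hat p}$ results through a coupling with $\Gnm{p_k}$; the negative statements (i) and (i') I would obtain by a direct minimum-degree computation, using that a $k$-connected graph has minimum degree at least $k$ and that a Hamilton cycle forces minimum degree at least $2$ (so (i') is the instance $k=2$).

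For (ii) and (ii') I would invoke a coupling, to be established in Section~\ref{SectionCoupling}, under which $\Gn{\hat p}\subseteq\Gnm{p_k}$ holds \whp, choosing $\hat p=(1-o(1))mp_k^2$ so that $n\hat p=\ln n+(k-1)\ln\ln n+\omega'$ with $\omega'\to\infty$ whenever $\omega\to\infty$. Both $k$-connectivity and Hamilton cycle containment are increasing properties, and by the Erd\H{o}s--R\'enyi and Koml\'os--Szemer\'edi--Bollob\'as theorems of Section~\ref{SectionGnp} (applied with $\omega'\to\infty$, and with $k=2$ in the Hamiltonian case) $\Gn{\hat p}$ has each of them \whp; the inclusion $\Gn{\hat p}\subseteq\Gnm{p_k}$ then transfers the property to $\Gnm{p_k}$ \whp. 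The restriction $\alpha>1$ enters precisely as the regime in which such a coupling with $\hat p$ as large as $(1-o(1))mp_k^2$ exists.

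For (i) and (i') it suffices to show that when $\omega\to-\infty$, \whp\ $\Gnm{p_k}$ has a vertex of degree below $k$ (below $2$ in the Hamiltonian case). By monotonicity of non-$k$-connectivity in $p$ I may assume $\omega\to-\infty$ arbitrarily slowly, which keeps $e^{-\omega}$ subpolynomial. Conditioning on $W_v$, the neighbours of $v$ are governed by the independent sets $W_u$, so the degree of $v$ is distributed as $\Bin{n-1}{q_{|W_v|}}$ with $q_d=1-(1-p_k)^d$. Since $np_k\to0$ for $\alpha>1$ and $|W_v|\sim\Bin{m}{p_k}$ concentrates around $mp_k$, one has $q_{|W_v|}=(1+o(1))\hat p_k$ for typical $v$, so the degree mimics a $\Bin{n-1}{\hat p_k}$ variable. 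Writing $Z_j$ for the number of vertices of degree exactly $j$, the dominant term is $j=k-1$, and a first-moment estimate should give
$$
\E Z_{k-1}=(1+o(1))\,n\binom{n-1}{k-1}\hat p_k^{\,k-1}(1-\hat p_k)^{n-k}=(1+o(1))\frac{e^{-\omega}}{(k-1)!}\to\infty .
$$
The delicate point is replacing $q_{|W_v|}$ by $\hat p_k$ uniformly enough: the fluctuation of $|W_v|$ feeds into $(1-q_{|W_v|})^{n-k}\approx e^{-(1+o(1))n\hat p_k}$, so one checks both that $n\hat p_k/\sqrt{mp_k}\to0$ and that the large-deviation tail of atypically small feature sets is negligible, since $\Pra{|W_v|\le\tfrac12 mp_k}\le e^{-c\,mp_k}$ is $o(e^{-n\hat p_k})$ exactly because $mp_k\gg\ln n$ when $\alpha>1$.

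Finally I would upgrade this to existence by a second-moment argument, showing $\mathrm{Var}\,Z_{k-1}=o\big((\E Z_{k-1})^2\big)$ and hence $Z_{k-1}\ge1$ \whp. The one genuine complication relative to $\Gn{\hat p}$ is the dependence created by shared features, but two fixed vertices are adjacent with probability $1-(1-p_k^2)^m=(1+o(1))\hat p_k=o(1)$, so pairwise correlations are of lower order and the variance bound closes. A vertex of degree $k-1<k$ then certifies that $\Gnm{p_k}$ is not $k$-connected, and for $k=2$ that it contains no Hamilton cycle. I expect this negative direction---executing the degree computation inside the intersection model while controlling both the feature-set fluctuations and the shared-feature dependence---to be the main obstacle, the positive direction being essentially a corollary of the coupling together with the monotonicity of the two properties.
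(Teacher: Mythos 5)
Your proposal is correct and follows essentially the same route as the paper: parts (ii) and (ii') via a coupling $\Gn{\hat p}\subseteq\Gnm{p_k}$ with $\hat p=(1-o(1))mp_k^2$ (this is exactly the paper's Lemma~\ref{LematCoupling} in the regime $np=o(1)$, which is where $\alpha>1$ enters) combined with monotonicity of $k$-connectivity and Hamiltonicity, and parts (i) and (i') via a first- and second-moment argument producing {\whp} a vertex of degree $k-1$, together with the same truncation of $\omega$ by monotonicity. The only difference is technical: you run the second moment directly on the degree, which given $|W_v|$ is $\Bin{n-1}{1-(1-p_k)^{|W_v|}}$, whereas the paper (Lemma~\ref{LemmaMinDegree}) deliberately replaces the degree by the dominating bipartite edge count $Z_v\sim\Bin{(n-1)|W_v|}{p_k}$ (so that $Z_v=k-1$ forces $\deg(v)\le k-1$) in order to simplify exactly the shared-feature dependence that you treat head-on via conditioning on $|W_v\cap W_{v'}|$.
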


\begin{tw}\label{TheoremAlphaLe1}
Let $k\ge 1$ be a constant, $\alpha\le 1$, $m=n^{\alpha}$,
\begin{align*}
p_k&=
\frac{\ln n + (k-1)\ln \ln n + \omega}{m}.
\end{align*}
\begin{itemize}
\item[(i)] If $\omega\to -\infty$, then {\whp} $\Gnm{p_1}$ is not $k$-connected.
\item[(ii)] If $\omega\to \infty$, then {\whp} $\Gnm{p_k}$ is $k$-connected.
\item[(i')] If $\omega\to -\infty$, then {\whp} $\Gnm{p_1}$ does not contain a Hamilton cycle.
\item[(ii')] If $\omega\to \infty$, then {\whp} $\Gnm{p_2}$ contains a Hamilton cycle.
\end{itemize}
\end{tw}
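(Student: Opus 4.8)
The plan is to separate the two lower bounds (i), (i') from the two upper bounds (ii), (ii'), and to exploit a single structural feature of the regime $\alpha\le 1$: here the minimum degree of $\Gnm{p}$ is governed \emph{only} by featureless vertices. Indeed, when $m=n^\alpha$ with $\alpha\le 1$ and $p=\Theta\left((\ln n)/m\right)$, a feature owned by a vertex $v$ is shared with $\Theta(np)=\Theta\left(n^{1-\alpha}\ln n\right)$ other vertices on average (this is $\Theta(\ln n)$ when $\alpha=1$), so any vertex owning at least one feature typically has degree of order at least $\ln n$, whereas a vertex owning no feature is isolated. There is no intermediate mechanism creating vertices of small positive degree, and hence the threshold for $k$-connectivity and for the presence of a Hamilton cycle is driven by the event $W_v=\emptyset$.

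For (i) and (i') I would take $p_1=(\ln n+\omega)/m$ with $\omega\to-\infty$ and show that {\whp} some vertex is isolated. Letting $X$ count the vertices $v$ with $W_v=\emptyset$, the independence of the feature sets of distinct vertices gives $X\sim\Bin{n}{(1-p_1)^m}$. Since $mp_1^2=(\ln n+\omega)^2/n^\alpha\to 0$, one has $(1-p_1)^m=e^{-mp_1(1+o(1))}=\frac{e^{-\omega}}{n}(1+o(1))$, so $\E X=e^{-\omega}(1+o(1))\to\infty$, and the one-line bound $\Pra{X=0}\le e^{-\E X}\to 0$ produces an isolated vertex {\whp}. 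An isolated vertex forbids $k$-connectivity for every $k\ge 1$ as well as a Hamilton cycle, giving exactly (i) and (i'). Note that this uses $p_1$ irrespective of $k$: the isolated-vertex threshold does not feel the $(k-1)\ln\ln n$ correction, a direct consequence of the degree-chunking above.

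For (ii) and (ii') I would follow the coupling philosophy of the paper: produce a value $\hat p$ and a coupling of $\Gn{\hat p}$ with $\Gnm{p_k}$ (respectively $\Gnm{p_2}$) under which $\Gn{\hat p}\subseteq\Gnm{p_k}$ {\whp}, with $\hat p$ above the corresponding $\Gn{\hat p}$ threshold. The slack is large: the marginal edge probability $1-(1-p_k^2)^m\sim mp_k^2=(\ln n+(k-1)\ln\ln n+\omega)^2/n^\alpha$ exceeds $(\ln n)/n$ by at least a factor $\ln n$ throughout $\alpha\le 1$. It therefore suffices to reach any $\hat p=(\ln n+(k-1)\ln\ln n+\omega')/n$ with $\omega'\to\infty$; the Erd\H{o}s--R\'{e}nyi $k$-connectivity theorem of Section~\ref{SectionGnp} then makes $\Gn{\hat p}$ {\whp} $k$-connected, and monotonicity of $k$-connectivity carries this to $\Gnm{p_k}$. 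The Hamilton case (ii') is identical, with $p_2$, the Hamilton threshold $\hat p=(\ln n+\ln\ln n+\omega')/n$, the Koml\'{o}s--Szemer\'{e}di--Bollob\'{a}s theorem, and monotonicity of Hamiltonicity. Because all these properties are monotone and $p_k\ge p_1$, I expect the same slack to let the coupling succeed already at $p_1$, indicating that the true threshold coincides with that of connectivity for every constant $k$.

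The main obstacle is constructing the coupling when $\alpha\le 1$, which is genuinely harder than for $\alpha>1$. For $\alpha>1$ one can manufacture independent embedded edges from features owned by exactly two vertices, but for $\alpha\le 1$ that source is negligible (a fixed feature is private to a given pair only with probability $\approx e^{-np}\to 0$); instead each feature spawns a large, heavily overlapping clique, so the edges of $\Gnm{p}$ are strongly positively correlated and a naive edge-by-edge exposure can push the conditional edge probability below $\hat p$ once enough non-edges have been revealed. My plan is to reveal the feature cliques rather than individual edges and to extract from these large random cliques an embedded $\Gn{\hat p}$ with $\hat p$ only of order $(\ln n)/n$ --- a deliberately lossy domination that the order-$\ln n$ slack computed above makes affordable. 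Establishing that this coupling holds {\whp} and identifying the best $\hat p$ it yields is where I expect essentially all the work to lie.
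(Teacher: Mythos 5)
Your argument for (i) and (i') is correct and complete: a vertex with $W_v=\emptyset$ is isolated, the number $X$ of such vertices has distribution $\Bin{n}{(1-p_1)^m}$, and since $mp_1^2=(\ln n+\omega)^2/n^{\alpha}\to 0$ you get $\E X=e^{-\omega+o(1)}\to\infty$ and $\Pra{X=0}=(1-(1-p_1)^m)^n\le e^{-\E X}\to 0$; an isolated vertex excludes $k$-connectivity and Hamiltonicity. This is the same mechanism the paper uses (it invokes Lemma~\ref{LemmaStopienMaleAlfa}, quoted from Singer's thesis, stating that {\whp} $\delta(\Gnm{p_1})=0$), and your version is self-contained.

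For (ii) and (ii'), however, there is a genuine gap: the coupling is never constructed. What you describe as ``where I expect essentially all the work to lie'' is precisely the content of the paper's Lemma~\ref{LematCoupling}, whose proof occupies all of Section~\ref{SectionCoupling}: each feature clique on $V_w$ (with $X_w=|V_w|$) is shown to dominate $\G{\lfloor X_w/2\rfloor}$ (generate $\lfloor X_w/2\rfloor$ uniformly sampled pairs first; at most $X_w$ vertices are non-isolated, so they can be embedded into $V_w$), the union over $w$ equals $\G{\sum_{w\in\W}\lfloor X_w/2\rfloor}$ by independence, and Chernoff concentration plus Poissonization ($\G{\Po{\lambda}}=\Gn{1-\exp(-\lambda/\binom{n}{2})}$) converts this into an embedded $\Gn{\hat p_-}$ with $\hat p_-=\frac{mp}{n}(1-o(1))$ in the regime $np\to\infty$ relevant here. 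Without this (or an equivalent) construction, the upper bounds are unproved. Moreover, your quantitative justification is off in a way that matters: you argue there is a factor-$\ln n$ slack coming from the marginal edge probability $\approx mp_k^2$, but the clique extraction is lossy by exactly the factor $np_k$ (a feature clique has $\approx(np_k)^2/2$ edges yet yields only $\approx np_k/2$ independent edge samples), so the embeddable density is $mp_k/n=(\ln n+(k-1)\ln\ln n+\omega)/n$ for every $\alpha\le 1$ --- landing \emph{exactly} at the $k$-connectivity (resp.\ Hamiltonicity) threshold, with only the additive slack $\omega$ remaining, diminished further by correction terms like $\frac{2}{np_k}\cdot(\ln n+(k-1)\ln\ln n+\omega)=O(1)$ that must be checked. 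There is no multiplicative room to spare; this tightness is exactly why the theorem is sharp. For the same reason, your closing expectation that ``the same slack'' lets the coupling succeed already at $p_1$ for every constant $k$ is unsupported: at $p_1$ the embeddable density is $(\ln n+\omega)/n$, strictly below the $k$-connectivity threshold for $k\ge 2$. The paper states that strengthening only as a conjecture, restricted to $\alpha<1$ and motivated by the separate minimum-degree bound of Lemma~\ref{LemmaStopienMaleAlfa2} rather than by the coupling, and it explicitly flags $\alpha=1$ as problematic.
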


\begin{rem}
Let $\mathcal{G}'(n,m,d)$ be a random intersection graph in which for all $v\in\V$ a feature set $D(v)$ is chosen uniformly at random from all $d$--element subsets of $\W$. This graph is sometimes called a uniform random intersection graph and is used to model wireless sensor networks with random predistribution of keys~(see for example \cite{WSNphase2,WSNWlosi}). By Lemma~4 in~\cite{WSNphase2} Theorems~\ref{TheoremConnectivity}, \ref{TheoremMatching} and~\ref{TheoremAlphaGe1} hold true, if we assume that $\alpha>1$ and replace $p_k$ by $d_k=mp_k$ and $\Gnm{p_k}$ by $\mathcal{G}'(n,m,d_k)$. Moreover by Lemma~3 in~\cite{WSNphase2} these theorems apply to even wider class of the random intersection graphs.
\end{rem}

\begin{proof}[Outline of the proof of Theorems~\ref{TheoremConnectivity}--\ref{TheoremAlphaLe1}]
Denote by $\deg(v)$ the degree of the vertex $v\in \V$ in $\Gnm{p}$ and by $\delta(\Gnm{p})=\min_{v\in \V}deg(v)$ the minimum degree of the graph.  The necessary condition for the $k$--connectivity is minimum degree at least $k$. In the case of perfect matching and Hamilton cycle containment the necessary condition is minimum degree at least $1$ and~$2$, respectively. Therefore the following two lemmas imply part (i) and (i') of the theorems. 
\begin{lem}\label{LemmaMinDegree}
Let $k\ge 1$ be a constant integer, $\alpha>1$ and  
$$
p_k=\sqrt{\frac{\ln n + (k-1)\ln \ln n + \omega}{nm}},
$$ 
\begin{itemize}
 \item[(i)] If $\omega\to -\infty$
then {\whp}
$\delta(\Gnm{p_k}) < k$ 
 \item[(ii)]
If $\omega\to \infty$
then {\whp}
$\delta(\Gnm{p_k}) \ge  k$ 
\end{itemize}
\end{lem}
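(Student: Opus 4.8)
The plan is to reduce both parts to moment estimates for the number of low-degree vertices, after replacing the dependent degrees by \emph{conditionally independent} binomials. The structural fact I would start from is that, conditionally on $W_v$ with $|W_v|=s$, each other vertex $u$ is joined to $v$ precisely when $W_u$ meets the $s$ features of $W_v$, independently over $u$; hence
$$
\bigl(\deg(v)\mid |W_v|=s\bigr)\sim\Bin{n-1}{q_s},\qquad q_s=1-(1-p_k)^s .
$$
Writing $L=\ln n+(k-1)\ln\ln n+\omega$, so that $p_k=\sqrt{L/(nm)}$, one checks the two basic scalings $mp_k=\sqrt{n^{\alpha-1}L}\to\infty$ and $np_k=\sqrt{L\,n^{1-\alpha}}\to0$. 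Thus a typical feature set has size about $mp_k$, for which $q_s\approx sp_k\approx mp_k^2=L/n$ and $(n-1)q_s\approx nmp_k^2=L$; this is exactly the mean degree of $\Gn{\hat p}$ at the $k$-connectivity threshold, which explains the coincidence of the two thresholds. I let $Y$ be the number of vertices $v$ with $\deg(v)\le k-1$, so that $\delta(\Gnm{p_k})\ge k$ is the event $\{Y=0\}$. Since $\{\delta<k\}$ is decreasing and $\{\delta\ge k\}$ increasing in $p_k$, by the natural monotone coupling of $\Gnm{p}$ in $p$ I may assume throughout that $|\omega|\le\ln\ln n$, which keeps $L\sim\ln n$.

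For the first moment I would write $\E Y=n\,\Pra{\deg(v)\le k-1}=n\,\E_s\!\bigl[\Pra{\Bin{n-1}{q_s}\le k-1}\bigr]$ and estimate the inner probability by the Poisson-type bound $\sum_{j=0}^{k-1}\binom{n-1}{j}q_s^{\,j}(1-q_s)^{\,n-1-j}$, using $1-q_s=(1-p_k)^s$. The sum over $s\sim\Bin{m}{p_k}$ then reduces to evaluating $\E_s[s^{\,j}(1-p_k)^{s(n-k)}]$, which I obtain by differentiating the generating function $\E_s[t^{\,s}]=(1-p_k+p_kt)^m$ at $t=(1-p_k)^{n-k}$. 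Because $np_k\to0$ one has $t\approx1-np_k$ and $1-p_k+p_kt\approx1-np_k^2=1-L/m$, so $\E_s[t^{\,s}]\approx e^{-L}$ and the $j$-th term contributes $\approx\frac{L^{\,j}}{j!}e^{-L}$; the dominant term $j=k-1$ gives $\Pra{\deg(v)\le k-1}\sim\frac{L^{k-1}}{(k-1)!}e^{-L}\sim\frac{1}{(k-1)!\,n\,e^{\omega}}$, hence $\E Y\sim\frac{1}{(k-1)!\,e^{\omega}}$. This tends to $0$ for $\omega\to\infty$, giving (ii) by Markov's inequality, and to $\infty$ for $\omega\to-\infty$.

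For (i) I need the second moment of $Y$. For distinct $u,v$ I condition on $(W_u,W_v)=(A,B)$ and use the key observation that, when $A\cap B=\emptyset$, the indicators $\mathbf 1[W_w\cap A\neq\emptyset]$ and $\mathbf 1[W_w\cap B\neq\emptyset]$ depend on disjoint coordinates of $W_w$ and are therefore independent; consequently $\deg(u)$ and $\deg(v)$ are independent given disjoint feature sets (and then $u\not\sim v$). On this event the joint low-degree probability factorises and, up to replacing $n-2$ by $n-1$, is at most $\Pra{\deg(u)\le k-1}\Pra{\deg(v)\le k-1}$. The complementary event $A\cap B\neq\emptyset$ has probability $1-(1-p_k^2)^m\approx mp_k^2=L/n$; bounding its contribution by conditioning on the sizes $|A|,|B|$ and using $\Pra{A\cap B\neq\emptyset\mid|A|,|B|}\le|A|\,|B|/m$ shows it is of order $(L/n)(\E Y)^2=o((\E Y)^2)$. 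Together these give $\E[Y^2]\le\E Y+(1+o(1))(\E Y)^2$, so $\mathrm{Var}(Y)=o((\E Y)^2)$ and Chebyshev's inequality yields $Y\ge1$ {\whp}, i.e. $\delta(\Gnm{p_k})<k$.

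I expect the main obstacle to be the first-moment estimate rather than the correlations. The conditional mean degree $(n-1)q_s$ depends on $s$ in an exponentially sensitive way, so one must show rigorously that the sum over feature-set sizes is governed by typical $s\approx mp_k$ and that rare small feature sets are negligible: a vertex with a bounded feature set is automatically of low degree, but such a set has probability of order $e^{-mp_k}=e^{-\sqrt{n^{\alpha-1}L}}$, which is far below the $e^{-L}$ scale of the main term precisely because $\alpha>1$ forces $mp_k\gg L$. Making this domination, together with the error terms in $q_s\approx sp_k$ and in the generating-function evaluation, uniform in $s$ is the delicate part, and it is exactly where the hypothesis $\alpha>1$ is used.
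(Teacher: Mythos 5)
Your strategy is viable and, in its key device, genuinely different from the paper's: you run the second moment directly on the number $Y$ of vertices of degree at most $k-1$, using that $\deg(v)$ given $|W_v|=s$ is $\Bin{n-1}{q_s}$ with $q_s=1-(1-p_k)^s$, and that $\deg(u)$, $\deg(v)$ are conditionally independent once $W_u\cap W_v=\emptyset$. The paper instead avoids the degree dependencies altogether by running the second moment on the bipartite edge counts $Z_v$ (edges of $\mathcal{B}(n,m,p_k)$ between $W_v$ and $\V\setminus\{v\}$), using $\{Z_v=k-1\}\subseteq\{\deg(v)\le k-1\}$ and an explicit expansion over the number $t$ of edges meeting shared features. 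Your first moment (including the truncation of $\omega$ by monotonicity, which is also how the paper proceeds) and your treatment of the disjoint case are correct, modulo the uniformity-in-$s$ issues you flag yourself.

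The genuine gap is the intersecting case of the second moment. Your bound of order $(L/n)(\E Y)^2$ (with your notation $L=\ln n+(k-1)\ln\ln n+\omega$) multiplies $\Pra{W_u\cap W_v\ne\emptyset}\approx L/n$ by the product of the two \emph{unconditional} low-degree probabilities, i.e.\ it presumes that the factorisation persists on the event $\{W_u\cap W_v\ne\emptyset\}$. But that is exactly the event on which your independence observation breaks down: a vertex $w$ can meet the shared features and become a common neighbour, so given intersecting feature sets the events $\{\deg(u)\le k-1\}$ and $\{\deg(v)\le k-1\}$ are positively correlated (both are decreasing in the independent choices $(W_w)_{w\ne u,v}$, so Harris's inequality bounds their joint probability from \emph{below} by the product, not from above). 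Conditioning on $|W_u|,|W_v|$ and using $\Pra{W_u\cap W_v\ne\emptyset\,\big|\,|W_u|,|W_v|}\le |W_u||W_v|/m$ controls only the intersection probability and says nothing about this correlation. Nor can the estimate be done sloppily: under your truncation $\E Y$ may grow as slowly as $\ln\ln n$, so losing even one factor of $L\sim\ln n$ (for instance the crude bound $n^2\Pra{W_u\cap W_v\ne\emptyset}\Pra{\deg(u)\le k-1}=L\,\E Y$) is fatal. The conclusion you claim is in fact true (even conservative), but it needs an argument, and a short one stays inside your framework: on $\{W_u\cap W_v\ne\emptyset\}$ we have $u\sim v$, so $\deg(u)\le k-1$ forces at most $k-2$ neighbours of $u$ in $\V\setminus\{u,v\}$; discarding the condition on $v$ and conditioning on $W_u$ (given $W_u$, the events $\{W_v\cap W_u\ne\emptyset\}$ and ``$u$ has at most $k-2$ neighbours in $\V\setminus\{u,v\}$'' are independent) gives for $k\ge2$
$$
n^2\,\E\left[q_{|W_u|}\,\Pra{\Bin{n-2}{q_{|W_u|}}\le k-2}\right]=(1+o(1))(k-1)\,\E Y=o\bigl((\E Y)^2\bigr),
$$
while for $k=1$ the intersecting contribution vanishes outright since intersection forces $\deg(u)\ge1$. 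With this one substitution your proof of part (i) is complete. The dependence you elided is precisely where the paper's proof invests its main computation: its sum over $t$ (bipartite edges landing on the shared features), whose $t\ge1$ terms are shown to be negligible using $\alpha>1$.
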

\begin{lem}~\label{LemmaStopienMaleAlfa}
Let $\alpha\le 1$ and 
$$
p_1=\frac{\ln n + \omega}{m}.
$$ 
\begin{itemize}
 \item[(i)] If $\omega\to -\infty$
then {\whp}
$\delta(\Gnm{p_1})=0$. 
 \item[(ii)]
If $\omega\to \infty$
then {\whp}
$\delta(\Gnm{p_1})\ge 1$.
\end{itemize}
\end{lem}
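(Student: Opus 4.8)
The plan is to analyse both parts through the number of isolated vertices, exploiting that in $\Gnm{p_1}$ the events attached to distinct features are independent. Since the presence of an isolated vertex is a monotone decreasing and $\delta\ge 1$ a monotone increasing property, and $\Gnm{p}$ is monotone in $p$, I may by monotonicity assume throughout that $|\omega|=o(\ln n)$; in particular $p_1\le 1$ and $mp_1^2=(\ln n+\omega)^2/n^{\alpha}\to 0$. Fix $v\in\V$. Then $v$ is isolated exactly when no feature is owned simultaneously by $v$ and by some other vertex; for a fixed $w\in\W$ this linking event has probability $r:=p_1\left(1-(1-p_1)^{n-1}\right)$, and since these events involve disjoint families of the independent indicators $\{w\in W_u\}_{u\in\V}$, they are independent over $w$. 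Hence
\[
\Pra{v\text{ is isolated in }\Gnm{p_1}}=(1-r)^m .
\]
I also record that $W_v=\emptyset$ forces $v$ to be isolated, that $\Pra{W_v=\emptyset}=(1-p_1)^m$, and that $mp_1^2\to 0$ gives $(1-p_1)^m=(1+o(1))e^{-\omega}/n$.

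For part (ii) I would run a first moment bound. Let $I$ count the isolated vertices, so that $\E I=n(1-r)^m\le n\,e^{-mr}$. The crux is to show $mr=\ln n+\omega-o(1)$; writing $mr=(\ln n+\omega)\left(1-(1-p_1)^{n-1}\right)$, it suffices to bound the correction $(\ln n+\omega)(1-p_1)^{n-1}$. For $\alpha<1$ the bound $1-p_1\le e^{-p_1}$ gives $(1-p_1)^{n-1}\le e^{-(n-1)p_1}$ with $(n-1)p_1=\Theta(n^{1-\alpha}\ln n)\to\infty$, so this is smaller than any power of $1/n$ and the correction vanishes. For $\alpha=1$ one has $np_1^2\to 0$, hence $(1-p_1)^{n-1}=(1+o(1))e^{-\omega}/n$, and since $\omega\to\infty$ the correction $(\ln n+\omega)e^{-\omega}/n\to 0$ as well. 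In either case $\E I\le e^{-\omega+o(1)}\to 0$, so Markov's inequality gives {\whp}\ $I=0$, that is $\delta(\Gnm{p_1})\ge 1$.

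For part (i) the first moment only yields $\E I\to\infty$, so rather than computing the variance of $I$ I would pass to the genuinely independent family of empty feature sets. Set $X=|\{v\in\V:W_v=\emptyset\}|$; the defining events depend on disjoint blocks of indicators, so $X\sim\Bin{n}{(1-p_1)^m}$ with $\E X=(1+o(1))e^{-\omega}\to\infty$ when $\omega\to-\infty$. Independence then gives the clean bound
\[
\Pra{X=0}=\left(1-(1-p_1)^m\right)^n\le e^{-\E X}\to 0 ,
\]
so {\whp}\ some vertex has an empty feature set and is therefore isolated, whence $\delta(\Gnm{p_1})=0$.

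The main obstacle is the asymptotic control of $r$, and it lives at the boundary case $\alpha=1$. There $(1-p_1)^{n-1}\sim e^{-\omega}/n$ is not negligible on its own scale, and one must verify that it nonetheless perturbs $mr$ by only $o(1)$, so that the threshold stays at $mp_1=\ln n+\omega$ rather than being shifted; this is exactly where $|\omega|=o(\ln n)$ and the consequence $np_1^2\to 0$ (valid precisely because $\alpha=1$) are used. The remaining steps — the factorisation over features, the inequalities $(1-r)^m\le e^{-mr}$ and $(1-(1-p_1)^m)^n\le e^{-\E X}$, and the appeal to Markov's inequality — are routine.
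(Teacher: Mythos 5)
Your proof is correct, but there is nothing in the paper to compare it against: the paper does not prove Lemma~\ref{LemmaStopienMaleAlfa} at all, it simply cites \cite{SingerPhD} (``the second lemma was shown in [SingerPhD]''). Your argument therefore serves as a valid, self-contained substitute, and its three main moves all check out. First, the exact factorisation $\Pra{v\text{ is isolated}}=(1-r)^m$ with $r=p_1\left(1-(1-p_1)^{n-1}\right)$ is legitimate, because the linking events attached to distinct features depend on disjoint blocks of the independent indicators $\{w\in W_u\}$. Second, in part (ii) the only delicate point is the correction term $(\ln n+\omega)(1-p_1)^{n-1}$ at $\alpha=1$, and your estimate $(1-p_1)^{n-1}=(1+o(1))e^{-\omega}/n$ (valid since $np_1^2\to 0$ under the reduction $|\omega|=o(\ln n)$) does show it is $o(1)$ when $\omega\to\infty$, so $mr=\ln n+\omega-o(1)$ and the first moment closes. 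Third, in part (i) passing to vertices with empty feature sets is a genuine simplification: those events are independent across vertices, so $X\sim\Bin{n}{(1-p_1)^m}$, $\E X=(1+o(1))e^{-\omega}\to\infty$, and $\Pra{X=0}\le e^{-\E X}\to 0$ follows with no variance computation — a direct second-moment attack on the full count of isolated vertices would have had to control the correlations induced by shared features, exactly the kind of calculation the paper goes through in its proof of Lemma~\ref{LemmaMinDegree}. Finally, your monotonicity reduction to $|\omega|=o(\ln n)$ is the same device the paper invokes there (via Facts~\ref{FaktCouplingGnmp} and~\ref{FaktCouplingWlasnosci}), and it also disposes of the degenerate regime $\omega<-\ln n$ in part (i), where $p_1$ would otherwise not be a probability.
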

\noindent The proof of the first lemma will be the subject of Section~\ref{SectionMinimumDegree} and the second lemma was shown in \cite{SingerPhD}. 

In the proof of the part (ii) and (ii') of the theorems we will use the fact that $k$--connectivity, Hamilton cycle containment and perfect matching containment are all increasing properties. Remind that for a family $\mathcal{G}$ of graphs with a vertex set $\V$, we  call $\mathcal{A}\subseteq \mathcal{G}$ an increasing property if $\mathcal{A}$ is closed under isomorphism and $G\in \mathcal{A}$ implies $G'\in \mathcal{A}$ for all $G'\in\mathcal{G}$ such that $E(G)\subseteq E(G')$. The following lemma will be shown in Section~\ref{SectionCoupling}.
\begin{lem}\label{LematCoupling}
Let $\mathcal{A}$ be an increasing property, $mp^{2} < 1$, and
\begin{equation}
\hat{p}_-=
\begin{cases}
mp^2\left(1-(n-2)p-\frac{mp^2}{2}\right)&\text{ for }np=o(1);\\
\frac{mp}{n}\left(1-\frac{\omega}{\sqrt{mnp}}-\frac{2}{np}-\frac{mp}{2n}\right)&\text{ for }np\to\infty\\ &\text{ and some } \omega\to\infty, \omega=o(\sqrt{mnp}). 
\end{cases}
\end{equation}
If 
$$
\Pra{\Gn{\hat{p}_-}\in\mathcal{A}}\to 1,
$$
then
\begin{equation}
\Pra{\Gnm{p}\in \mathcal{A}}\to 1.
\end{equation}
\end{lem}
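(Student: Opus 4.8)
The plan is to build, on a single probability space, a coupling of $\Gn{\hat{p}_-}$ and $\Gnm{p}$ under which $E(\Gn{\hat{p}_-})\subseteq E(\Gnm{p})$ \whp. Granting this, the lemma follows at once: since $\mathcal A$ is increasing, on the event that $\Gn{\hat{p}_-}\in\mathcal A$ and that the edge inclusion holds we also have $\Gnm{p}\in\mathcal A$, so
\[
\Pra{\Gnm{p}\in\mathcal A}\ge \Pra{\Gn{\hat{p}_-}\in\mathcal A}+\Pra{E(\Gn{\hat{p}_-})\subseteq E(\Gnm{p})}-1\longrightarrow 1 .
\]
Thus everything reduces to exhibiting inside $\Gnm{p}$ a spanning subgraph whose per--pair edge indicators dominate an independent family of $\Bin{1}{\hat{p}_-}$ variables. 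I would expose $\Gnm{p}$ through its feature structure: each feature $w\in\W$ receives the set $V_w=\{v:w\in W_v\}$, the $V_w$ are independent with $|V_w|\sim\Bin{n}{p}$, and $\Gnm{p}$ is the union of the cliques on the $V_w$. The two shapes of $\hat{p}_-$ correspond to two different ways of extracting nearly independent edges from these cliques.

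When $np=o(1)$ almost every $V_w$ has size $0$ or $1$, and essentially all edges come from \emph{private} features, those $w$ with $V_w=\{i,j\}$ for a single pair. A fixed feature is private to a given pair with probability $p^2(1-p)^{n-2}$, so by the Bonferroni inequality the probability that a pair owns at least one private feature is at least $mp^2(1-p)^{n-2}\bigl(1-\tfrac{m}{2}p^2(1-p)^{n-2}\bigr)$, and $(1-p)^{n-2}\ge 1-(n-2)p$ turns this into exactly $\hat{p}_-=mp^2\bigl(1-(n-2)p-\tfrac{mp^2}{2}\bigr)$. Declaring the coupled edge $\{i,j\}$ present whenever the pair owns a private feature yields a subgraph of $\Gnm{p}$, and it remains to dominate $\Gn{\hat{p}_-}$ from below. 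Because each feature is private to at most one pair, exposing the pairs sequentially only removes already--used features, and \whp\ merely an $o(1)$ fraction of the $m$ features is ever used (the total number of private features is of order $mn^2p^2=o(m)$); hence the conditional probability of realising each successive target edge stays at least $(1-o(1))\hat{p}_-$, which is enough to dominate the independent target family.

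When $np\to\infty$ the sets $V_w$ are large, of size $\approx np$, and the private--feature mechanism is far too wasteful; instead I would retain, independently for each feature, the edges of a uniformly random (near--)perfect matching of $V_w$. This keeps one incident edge per vertex per feature, giving a subgraph of $\Gnm{p}$ in which a fixed pair is matched through a given feature with probability $p^2\,\E\bigl[1/(|V_w|-1)\mid i,j\in V_w\bigr]\approx p^2/(np)=p/n$, hence per--pair edge probability $\approx mp/n$ over the $m$ independent features. After conditioning on the event (which holds \whp) that all $|V_w|$ and all $|W_v|$ concentrate around their means — a Chernoff bound and a union bound over the $n$ vertices, which is exactly what forces the range $\omega\to\infty$, $\omega=o(\sqrt{mnp})$ — this probability is bounded below by $\hat{p}_-=\frac{mp}{n}\bigl(1-\frac{\omega}{\sqrt{mnp}}-\frac{2}{np}-\frac{mp}{2n}\bigr)$, the three corrections accounting for the concentration error, the deletion of the two endpoints from $V_w$, and the double counting of a pair matched by two features.

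The step I expect to be the main obstacle is the stochastic domination itself: the edges of $\Gnm{p}$ are genuinely dependent, since every feature creates a clique, so the heart of each case is to certify that the extracted edge family dominates an \emph{independent} $\Gn{\hat{p}_-}$ rather than merely having the right marginals. The two devices above are precisely what decouples the edges — restricting to private features so that each unit of randomness feeds at most one pair in the sparse regime, and using independent per--feature matchings in the dense regime — while every multiplicative correction in $\hat{p}_-$ is the price of replacing the true conditional probabilities by the single lower bound $\hat{p}_-$. The proof then closes by a union bound over the two vanishing events, namely that the coupling fails and that $\Gn{\hat{p}_-}\notin\mathcal A$.
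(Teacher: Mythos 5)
Your opening reduction (increasing property plus a \whp{} coupling implies the conclusion) is exactly the paper's Fact~\ref{FaktCouplingWlasnosci}, and your per--pair marginal computations in both regimes do reproduce $\hat{p}_-$ correctly. But the entire content of the lemma is the step you leave as an assertion: that the edge family you extract from $\Gnm{p}$ \emph{jointly} stochastically dominates an independent $\Gn{\hat{p}_-}$. Matching marginals never give this, and your extracted families are negatively dependent in a way that works against you: a feature private to one pair cannot serve another, and within one feature a matching edge at a vertex $v$ excludes every other pair containing $v$. (In the extreme, with one feature and two pairs, the two private--feature indicators have the right marginals but can never occur together, so they dominate \emph{no} independent pair with positive parameter.) In Case 1 your sequential--exposure argument has two concrete defects. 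First, conditioning on the edge/non--edge history of earlier pairs is not the same as ``removing already--used features''; the clean fix is to reveal the full set of features private to each earlier pair, but then the conditional success probability loses a factor $1-K/m$ with $K\approx mn^2p^2/2$ used features, i.e.\ a loss of order $(np)^2/2$ --- exactly the same order (and constant) as the slack hidden in $(1-p)^{n-2}\ge 1-(n-2)p$, so whether the bound closes requires bookkeeping you do not do. Second, and fatally, ``conditional probability at least $(1-o(1))\hat{p}_-$, which is enough'' is false as stated: that would give $\Gn{(1-o(1))\hat{p}_-}\coup\Gnm{p}$, and the lemma's hypothesis concerns $\hat{p}_-$ only. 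Since $\mathcal{A}$ is increasing, nothing is known about $\Gn{(1-o(1))\hat{p}_-}$; in the applications $\hat{p}_-$ sits at a sharp threshold $(\ln n+\omega)/n$ with $\omega\to\infty$ arbitrarily slowly, so an unquantified multiplicative $1-o(1)$ can lose $o(\ln n)/n$ and swallow the $\omega/n$ margin. The explicit correction terms in $\hat{p}_-$ exist precisely because generic $(1-o(1))$ losses are not affordable here. Case 2 has the same gap in a sharper form: matchings do not ``decouple'' edges, and your union graph obeys the hard cap $\deg(v)\le |W_v|$, whereas the degrees of the target $\Gn{\hat{p}_-}$ are $\Bin{n-1}{\hat{p}_-}$ with fluctuations of order $\sqrt{mp}$, which dwarf the mean gap of order $\omega\sqrt{mp/n}$; conditioning on concentration of all $|W_v|$ and $|V_w|$, as you propose, cannot repair this, so the domination must be built jointly, not pair by pair.

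The paper's proof supplies exactly the device your proposal is missing. It couples each feature's clique from below by a \emph{multigraph} of uniformly sampled pairs: $\G{\lfloor X_w/2\rfloor}\coupling\mathcal{G}[V_w]$, because $\lfloor X_w/2\rfloor$ sampled pairs touch at most $X_w$ vertices, which can be relabelled into the uniformly random set $V_w$. Unions over the independent features then give, by Fact~\ref{FaktSumaGgwiazdka}, a single multigraph $\G{\sum_{w\in\W}\lfloor X_w/2\rfloor}$ inside $\Gnm{p}$, where now \emph{all} dependence is concentrated in one scalar random variable, the number of samples. Concentration of that scalar (Chernoff), monotonicity in the number of samples, and the exact identity $\G{\Po{\lambda}}=\Gn{1-\exp(-\lambda/\binom{n}{2})}$ (Poissonization, Facts~\ref{FaktCouplingGgwiazdka} and~\ref{FaktDtv}) then convert the picture into a genuine Erd\H{o}s--R\'enyi graph with every correction term explicit --- this is how the paper lands on the stated $\hat{p}_-$ rather than on $(1-o(1))\hat{p}_-$. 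Until your construction contains an analogous step turning dependent per--feature extractions into an exactly independent edge family, the proposal does not prove the lemma.
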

\noindent Lemma~\ref{LematCoupling} combined with results presented in Section~\ref{SectionGnp} implies part (ii) and (ii') of the theorems. 
\end{proof}

\begin{rem}
In the case $\alpha<1$ it is simple to strengthen Lemma~\ref{LemmaStopienMaleAlfa}. Therefore having in mind the 'minimum degree phenomenon' we may conjecture, that the threshold function given in Theorem~\ref{TheoremAlphaLe1} may be tightened. The lemma and the conjecture are stated below.
\end{rem} 
\begin{lem}\label{LemmaStopienMaleAlfa2}
Let $\alpha < 1$ and 
$$
p_1=\frac{\ln n + \omega}{m}.
$$ 
If $\omega\to \infty$
then {\whp}
$\delta(\Gnm{p_1})\ge (1+o(1))n\ln n/m$.
\end{lem}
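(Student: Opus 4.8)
The plan is to control the minimum degree directly, by conditioning on the sizes of the feature sets and bounding each vertex degree from below by a single binomial variable. First I would record the conditional law of a degree: if we condition on $|W_v|=d$, then a fixed vertex $u\neq v$ is adjacent to $v$ precisely when $W_u$ meets the $d$ features of $W_v$, and these events are independent across $u$, each of probability $1-(1-p_1)^d$. Hence, conditionally on $|W_v|=d$, the degree $\deg(v)$ has distribution $\Bin{n-1}{1-(1-p_1)^d}$. Since $1-(1-p_1)^d$ is increasing in $d$, this family is stochastically increasing in $d$; in particular for every $d\ge 1$ we have $1-(1-p_1)^d\ge p_1$, so that $\Bin{n-1}{p_1}\prec \Bin{n-1}{1-(1-p_1)^d}$.

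Next I would discard the atypical vertices with empty feature sets. The number of $v$ with $W_v=\emptyset$ has expectation $n(1-p_1)^m=(1+o(1))n e^{-mp_1}=(1+o(1))e^{-\omega}\to 0$, so by Markov's inequality {\whp} every feature set is non-empty. Combined with the stochastic domination above, this lets me bound, for any fixed $v$ and any threshold $t$,
$$
\Pra{\deg(v)<t}\le \Pra{|W_v|=0}+\Pra{\Bin{n-1}{p_1}<t},
$$
since on the event $|W_v|\ge 1$ the conditional degree dominates $\Bin{n-1}{p_1}$. I would then take $t=(1-\varepsilon)(n-1)p_1$ and apply the lower-tail Chernoff bound $\Pra{\Bin{n-1}{p_1}<(1-\varepsilon)(n-1)p_1}\le \exp\!\left(-\varepsilon^2(n-1)p_1/2\right)$, finishing with a union bound over the $n$ vertices.

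The decisive point, and the one place where $\alpha<1$ is genuinely used, is that $(n-1)p_1=(1+o(1))n^{1-\alpha}(\ln n+\omega)$ grows polynomially in $n$. Consequently $n\exp\!\left(-\varepsilon^2(n-1)p_1/2\right)\to 0$, so the union bound closes, and together with $n\,\Pra{|W_v|=0}=(1+o(1))e^{-\omega}\to 0$ I would conclude that {\whp} $\delta(\Gnm{p_1})\ge (1-\varepsilon)(n-1)p_1$. Choosing $\varepsilon=\varepsilon_n\to 0$ slowly (any $\varepsilon_n$ with $\varepsilon_n^2 n^{1-\alpha}\ln n\to\infty$ works, e.g.\ $\varepsilon_n=1/\ln\ln n$) keeps the exponential bound summable, so in fact {\whp} $\delta(\Gnm{p_1})\ge (1-o(1))(n-1)p_1=(1+o(1))n\ln n/m$, as claimed.

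I expect the only real obstacle to be the balance in this last step: the union-bound factor $n$ must be beaten by the Chernoff decay $\exp\!\left(-\varepsilon^2(n-1)p_1/2\right)$, which is exactly why the strengthening needs $\alpha<1$; for $\alpha=1$ one has $(n-1)p_1=(1+o(1))\ln n$ and the product $n\exp\!\left(-\varepsilon^2\ln n/2\right)=n^{1-\varepsilon^2/2}$ fails to vanish, so the conclusion cannot hold. Everything else—the conditional binomial law, the monotonicity in $d$, and the first-moment estimate on empty feature sets—is routine. As a bookkeeping remark, the bound is in fact tight when $\omega=o(\ln n)$, since the minimum is then attained at vertices carrying a single feature, whose degree is exactly $\Bin{n-1}{p_1}$, concentrated near $(n-1)p_1=(1+o(1))n\ln n/m$; but only the lower bound is required here.
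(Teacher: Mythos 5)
Your proof is correct, and it reaches the paper's bound by a genuinely different decomposition. The paper works on the \emph{feature side}: it invokes Lemma~\ref{LemmaStopienMaleAlfa} to guarantee that {\whp} every vertex lies in some feature class $V_w$, observes that $v\in V_w$ forces $\deg(v)\ge |V_w|-1$, and then applies the Chernoff bound \eqref{Chernoff} to the $m$ variables $|V_w|\sim\Bin{n}{p_1}$, closing with a union bound over the $m=n^{\alpha}$ features. You work on the \emph{vertex side}: conditionally on $|W_v|=d$ the degree of $v$ is exactly $\Bin{n-1}{1-(1-p_1)^d}$, which for $d\ge 1$ stochastically dominates $\Bin{n-1}{p_1}$, and you finish with Chernoff plus a union bound over the $n$ vertices, handling the event $W_v=\emptyset$ by a one-line first-moment estimate instead of citing Lemma~\ref{LemmaStopienMaleAlfa}. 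The paper's route is shorter because it recycles an earlier lemma and only needs concentration of $m\le n$ identically distributed binomials; your route is self-contained, gives a clean per-vertex tail bound, and makes explicit exactly where $\alpha<1$ enters (the union-bound factor $n$ must be beaten by $\exp\left(-\varepsilon^2(n-1)p_1/2\right)$, which indeed fails at $\alpha=1$) --- a point the paper leaves implicit. One caveat on your closing side remark only: tightness via vertices carrying a single feature requires such vertices to exist {\whp}, and the expected number of them is about $(\ln n+\omega)e^{-\omega}$, so this needs essentially $\omega\le \ln\ln n-\omega'$ with $\omega'\to\infty$, not merely $\omega=o(\ln n)$; as you say, this plays no role in proving the stated lower bound.
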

\begin{conj}
Let $\alpha < 1$
$$
p=\frac{\ln n + \omega}{m} 
$$
and $\omega\to \infty$. Then {\whp}
$\Gnm{p}$ is $k$-connected for any constant $k$ and contains a Hamilton cycle. 
\end{conj}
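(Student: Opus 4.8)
The plan is to abandon the edge-coupling of Lemma~\ref{LematCoupling} and argue directly from the feature structure. In this regime that coupling is badly lossy: it only extracts a copy of $\Gn{\hat p}$ with $\hat p\approx mp/n\approx(\ln n)/n$, which sits just above the connectivity threshold but below the $k$-connectivity (for $k\ge 2$) and Hamilton thresholds, so it cannot see the conclusion. The point to exploit is that Lemma~\ref{LemmaStopienMaleAlfa2} already gives $\delta(\Gnm{p_1})\ge(1+o(1))n\ln n/m=(1+o(1))n^{1-\alpha}\ln n\to\infty$: not only is the necessary minimum-degree condition satisfied with an enormous margin, but the effective edge density $mp^2\approx n^{-\alpha}(\ln n)^2$ exceeds the relevant $\Gn{\hat p}$ thresholds by a factor $n^{1-\alpha}$. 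I would use this slack to prove two expansion properties of $\Gnm{p}$ and deduce both conclusions from them.

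First I would establish small-set expansion: {\whp} every $S\subseteq\V$ with $1\le|S|\le n/4$ satisfies $|N(S)\setminus S|\ge 2|S|$, and every $S$ with $n/4<|S|\le n/2$ has $N(S)\supseteq\V\setminus S'$ with $|S'|=o(n)$. The natural parameter is the set $F_S=\bigcup_{v\in S}W_v$ of features touched by $S$: conditioning on $F_S$ makes the events $\{u\in N(S)\}$, $u\notin S$, independent, so $|N(S)\setminus S|$ concentrates by Chernoff, while $|F_S|$ itself concentrates as a sum of independent indicators over features. A short computation gives $\E|N(S)\setminus S|\approx n\bigl(1-(1-p(1-(1-p)^{|S|}))^m\bigr)$, which is $\gg 2|S|$ throughout: for tiny $S$ this is immediate from $|N(S)\setminus S|\ge\delta-|S|$, for intermediate $S$ the expectation beats $|S|\ln(n/|S|)$ by the factor $n^{1-\alpha}$, and for large $S$ one has $F_S=\W$ so almost every outside vertex is a neighbour. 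The union bound over the $\binom{n}{|S|}$ choices then goes through, the per-feature estimates behaving differently for $\alpha<1/2$ and $\alpha>1/2$.

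Next I would prove the complementary property: {\whp} any two disjoint sets $X,Y$ with $|X|,|Y|\ge n/5$ are joined by an edge. The probability that no feature is shared between $X$ and $Y$ equals $\bigl(1-(1-(1-p)^{|X|})(1-(1-p)^{|Y|})\bigr)^m$, and since $p|X|,p|Y|\ge\tfrac15 n^{1-\alpha}\ln n\to\infty$ each factor $1-(1-p)^{|X|}$ is $1-o(1)$, so this is at most $n^{-(1-o(1))\min(|X|,|Y|)}$, comfortably beating the $4^n$ choices of $(X,Y)$. With the two properties in hand, $k$-connectivity is immediate: a separator of size $<k$ would leave a component $A$ with $|N(A)\setminus A|<k$, impossible for $|A|\le k/2$ by minimum degree, for $k/2<|A|\le n/4$ by the factor-$2$ expansion, and for $n/4<|A|\le n/2$ because such $A$ meets almost every vertex. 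For the Hamilton cycle I would feed both properties into the deterministic P\'osa rotation--extension machinery: small-set expansion forces the set of endpoints reachable by rotations from a longest path to be linear, and the edge guaranteed between two such linear endpoint-sets either extends the path or, with connectivity, closes it into a Hamilton cycle. This closing step is deterministic, so no second exposure is required --- which matters, since at the exact threshold $mp=\ln n$ one cannot afford to split off a constant fraction of the features for a sprinkling argument.

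The main obstacle is the intermediate regime of the small-set expansion property, where the margin of $\E|N(S)\setminus S|$ over $2|S|$ is smallest and where the dependence among the indicators $\{u\in N(S)\}$ forces one to carry the concentration of $|F_S|$ and the conditional Chernoff bound simultaneously. Getting the constants in the exponent right so that the deviation probability beats $\binom{n}{|S|}$ uniformly --- across both $\alpha<1/2$ and $\alpha\ge1/2$ and, in particular, for slowly growing $\omega=o(\ln\ln n)$, the case that defeats the coupling --- is the crux of the argument.
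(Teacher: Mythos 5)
First, a point of order: the statement you are proving is stated in the paper as a \emph{conjecture}. The paper offers no proof of it --- indeed it explains why its own coupling method cannot reach it: the coupling of Lemma~\ref{LematCoupling} only delivers $\Gn{\hat p_-}$ with $\hat p_-\approx mp/n=(\ln n+\omega)/n$, which is why Theorem~\ref{TheoremAlphaLe1}(ii),(ii') must pay the extra $(k-1)\ln\ln n$ term in $p_k$. Your diagnosis of this loss is exactly right, and your overall program --- work directly in the bipartite feature structure, condition on $F_S=\bigcup_{v\in S}W_v$ to make the events $\{u\in N(S)\}$ independent, prove $2$-expansion for $|S|\le n/4$ plus an edge between any two disjoint linear sets, and feed these into a deterministic P\'osa argument --- is a sound and standard line of attack, consistent with how such results are proved for expanding random graphs. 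So the proposal is credible, but it is a strategy, not a proof, and two of its steps have genuine holes as written.

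The first hole is the one you yourself flag, and it is sharper than you indicate. For intermediate $s=|S|$ the natural move --- Chernoff on $|F_S|$ at constant relative deviation --- fails: $\E|F_S|\approx s\ln n$, so $\Pra{|F_S|\le \E|F_S|/2}\le e^{-s\ln n/8}=n^{-s/8}$, which does \emph{not} beat the $\binom{n}{s}\approx n^{s(1-\log_n s)}$ choices of $S$ when $s\le n^{1-\varepsilon}$. The argument can be rescued, but only by exploiting that you need far less than half the mean: the conditional expectation of $|N(S)\setminus S|$ is about $np\,|F_S|$ with $np=n^{1-\alpha}\ln n$, so $|F_S|\ge Cs/(np)$ suffices, a factor $n^{1-\alpha}(\ln n)^2$ below the mean, where the extreme lower tail gives $e^{-(1-o(1))s\ln n}=n^{-(1-o(1))s}$; this beats $\binom{n}{s}$ only once $\log_n s$ is bounded away from $0$, i.e.\ for $s$ polynomially large, so you must also stretch the minimum-degree regime ($|N(S)\setminus S|\ge\delta-|S|$, valid for $s\le\delta/3\approx n^{1-\alpha}\ln n/3$) to hand over exactly there. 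None of this is carried out in the proposal, and a reader following your stated plan (``Chernoff on $|F_S|$'') would run into the failure above. The second hole is in the closing step: an edge between the rotation-endpoint set $R$ and the union of the sets $R_y$ does \emph{not} close the longest path --- the edge must join $y$ to $R_y$ for the \emph{same} $y$, and your ``any two disjoint $n/5$-sets are joined'' property cannot produce an edge at a single prescribed vertex $y$. What you need is the known, genuinely nontrivial deterministic lemma that $2$-expansion up to $n/4$ together with an edge between every two disjoint linear sets implies Hamiltonicity (proved by a double-rotation argument); your one-sentence version elides precisely the difficulty that lemma exists to resolve. With these two steps properly done the program looks viable --- and would in fact settle the paper's open conjecture --- but as it stands the proposal does not constitute a proof.
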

The stated conjecture contains assumption that $\alpha<1$. We believe that the case $\alpha=1$ is more complex. To support the thesis we give results concerning degree distribution~\cite{GpDegree} and phase transition~\cite{GpPhaseTransition2} for $\alpha=1$. Although they regard $p$ near phase transition threshold, they show that, for some properties there is a value of $\alpha$ for which the analysis of $\Gnm{p}$ is complicated.

\section{Proof of Lemma~\ref{LemmaMinDegree} and \ref{LemmaStopienMaleAlfa2}}\label{SectionMinimumDegree}

In the proofs we will use Chernoff bound (see Theorem 2.1 in \cite{KsiazkaJLR})
\begin{lem}
Let $X_n$ be a sequence of random variables with binomial distribution and $t_n>0$.\\
Then
\begin{equation}\label{Chernoff}
\textstyle \Pra{|X_n-\E X_n|\ge t_n}\le 2\exp\left(-\frac{3 t_n^2}{2 (3\E X_n + t_n)}\right).
\end{equation}
\end{lem}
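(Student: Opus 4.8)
The plan is to derive the two-sided estimate from one-sided exponential tail bounds obtained by the classical Chernoff (exponential-moment) method, and then to recombine the two halves by a union bound, which is precisely what produces the factor $2$ in \eqref{Chernoff}. Fix $n$, write $X=X_n\sim\Bin{N}{q}$ for the underlying binomial parameters, set $\mu=\E X=Nq$, and abbreviate $t=t_n$; since the right-hand side of \eqref{Chernoff} depends only on $\mu$ and $t$, this costs nothing, and the degenerate case $\mu=0$ is trivial.

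First I would treat the upper tail $\Pra{X\ge\mu+t}$. For every $\lambda>0$, Markov's inequality applied to $e^{\lambda X}$ gives $\Pra{X\ge\mu+t}\le e^{-\lambda(\mu+t)}\E e^{\lambda X}$. Because $X$ is a sum of independent Bernoulli variables, $\E e^{\lambda X}=\left(1+q(e^\lambda-1)\right)^N\le\exp\left(\mu(e^\lambda-1)\right)$, using $1+y\le e^y$. Minimising the resulting exponent $-\lambda(\mu+t)+\mu(e^\lambda-1)$ over $\lambda$ (the minimiser being $\lambda=\ln(1+t/\mu)$) yields $\Pra{X\ge\mu+t}\le\exp\left(-\mu\,\varphi(t/\mu)\right)$, where $\varphi(x)=(1+x)\ln(1+x)-x$.

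The analytic crux is the elementary inequality $\varphi(x)\ge \frac{3x^2}{2(3+x)}$ for all $x\ge 0$; substituting $x=t/\mu$ turns $\mu\,\varphi(t/\mu)$ into exactly $\frac{3t^2}{2(3\mu+t)}$, which gives the claimed bound for the upper tail. I would prove this inequality by writing $g(x)=\varphi(x)-\frac{3x^2}{2(3+x)}$, noting $g(0)=0$, and checking $g'(x)\ge 0$ on $[0,\infty)$ after clearing the denominator (equivalently, exploiting $\varphi(x)=\int_0^x\ln(1+s)\,ds$). I expect this to be the main obstacle: the two functions agree through third order at $0$ (their difference is of order $x^4$), so the comparison is genuinely tight near the origin and a careless bound will fail there.

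For the lower tail $\Pra{X\le\mu-t}$ the mirror-image argument with a negative exponent gives $\Pra{X\le\mu-t}\le\exp\left(-\mu\,\psi(t/\mu)\right)$, where $\psi(x)=(1-x)\ln(1-x)+x$ for $0<x\le 1$ (and the event is empty, hence the bound trivial, once $t>\mu$). Here the estimate is cleaner: expanding $\psi(x)=\sum_{j\ge2}x^j/(j(j-1))\ge x^2/2$ already gives $\mu\,\psi(t/\mu)\ge t^2/(2\mu)$, and a one-line comparison shows $t^2/(2\mu)\ge\frac{3t^2}{2(3\mu+t)}$ since $3\mu+t\ge 3\mu$. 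Thus the lower tail obeys the same bound as the upper tail, and summing the two one-sided estimates through $\Pra{|X-\mu|\ge t}\le\Pra{X\ge\mu+t}+\Pra{X\le\mu-t}$ produces the factor $2$ and establishes \eqref{Chernoff}.
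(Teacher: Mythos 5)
Your proof is correct, and it is essentially the paper's own proof: the paper does not argue this lemma at all but cites Theorem~2.1 of Janson--{\L}uczak--Ruci\'nski, and your argument is precisely the standard derivation given there --- the exponential-moment bound yielding $\exp\left(-\mu\varphi(t/\mu)\right)$ with $\varphi(x)=(1+x)\ln(1+x)-x$, the calculus inequality $\varphi(x)\ge \frac{3x^2}{2(3+x)}=\frac{x^2}{2(1+x/3)}$ (which rewrites the exponent exactly as $\frac{3t^2}{2(3\mu+t)}=\frac{t^2}{2(\mu+t/3)}$), the stronger lower-tail bound $\exp\left(-t^2/(2\mu)\right)$ via $\psi(x)\ge x^2/2$, and a union bound producing the factor $2$. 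Your cautionary remark is also accurate --- $\varphi(x)-\frac{3x^2}{2(3+x)}=\frac{x^4}{36}+O(x^5)$, so the two sides agree through third order and the sign check of $g'(x)=\ln(1+x)-\frac{3x(x+6)}{2(3+x)^2}$ (with $g'(0)=0$, $g'(x)\sim x^3/9$ near $0$) is exactly the care the step requires.
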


\begin{proof}[Proof of Lemma~\ref{LemmaStopienMaleAlfa2}]
Let $w\in \W$.
Denote by $V_w$ the set of vertices which have chosen feature $w$.
Under the assumptions of the lemma by Lemma~\ref{LemmaStopienMaleAlfa} {\whp} for all $v\in \V$ there exists at least one $w\in\W$ such that $v\in V_w$. By definition of $\Gnm{p}$ if  $v\in V_w$, then $deg(v)\ge |V_w|$. Therefore the result follows by Chernoff bound \eqref{Chernoff} used for the random variable $Y_w=|V_w|$.
\end{proof}

\begin{proof}[Proof of Lemma~\ref{LemmaMinDegree}]
The part (ii) is easily obtained by the first moment method (see for example \cite{KsiazkaErdosSpencer,KsiazkaJLR}), since the expected number of the vertices of degree at most $k-1$ tends to zero for $\omega\to \infty$. Moreover we need only part (i) of the lemma, therefore we will concentrate on it. Although the proof is rather standard application of the second moment method (see~\cite{KsiazkaErdosSpencer,KsiazkaJLR}) we give it for completeness of considerations. 

We will assume that $\omega=o(\ln n)$. For other values of $\omega$ we may use the fact that having the minimum degree at least $k$ is an increasing property and a simple coupling argument (see Facts~\ref{FaktCouplingGnmp} and~\ref{FaktCouplingWlasnosci}).

The vertex degree analysis becomes complex for $\alpha$ near $1$ due to edge dependencies. Therefore, to simplify arguments, we will not study the degree directly but an auxiliary random variable, which approximates the degree of the vertex. Let $\mathcal{B}(n,m,p_k)$ be the random bipartite graph with bipartition $(\V,\W)$ in which $v$ and $w$ ($v\in\V$,$w\in\W$) are connected by an edge if and only if $w$ is a feature of $v$ in $\Gnm{p_k}$(i.e. $w\in W_v$). Note that by definition in $\mathcal{B}(n,m,p_k)$ each edge between $\V$ and $\W$ appears independently with probability $p_k$. Let $Z_v$, $v\in \V$, be a random variable counting edges between $W_v$ and $\V\setminus\{v\}$ in $\mathcal{B}(n,m,p_k)$. Let moreover $\xi_{v}$, $v\in\V$, be an indicator random variable such that
$$
\xi_{v}=
\begin{cases}
1,&\text{ if } Z_v=k-1;\\
0,&\text{ otherwise} 
\end{cases}
$$ 
and let
$$
\xi=\sum_{v\in\V} \xi_v.
$$

Surely, if $\xi_v=1$, then $deg(v)\le k-1$. Therefore we only need to prove that
$$
\Pra{\xi>0}\to 1.
$$ 
For that we will use the second moment method, i.e. we will show that
\begin{equation}\label{RownanieDrugiMoment}
\E \xi \to \infty
\quad
\text{and}
\quad
\E \xi (\xi - 1) \le (1+o(1))(\E \xi)^2.
\end{equation}
We will use the fact that for any $v,v'\in \V$
\begin{align*}
&\E \xi = n \Pr\{Z_v=k-1\}\\
&\E \xi (\xi - 1) = n(n-1) \Pr\{Z_v=k-1,Z_{v'}=k-1\}. 
\end{align*}
Therefore in order  to show \eqref{RownanieDrugiMoment} we will prove that
$$
n \Pr\{Z_v=k-1\}\to \infty\\ 
$$
and
\begin{multline*}
\Pra{Z_v=k-1,Z_{v'}=k-1}\le\\
 (1+o(1))\Pra{Z_v=k-1\}\Pr\{Z_{v'}=k-1}+o\left(\frac{1}{n^2}\right).
\end{multline*}

Given the value of a random variable $X_v=|W_v|$, i.e. given $X_v=x$, the random variable $Z_v$ has binomial distribution $\Bin{(n-1)x}{p_k}$. $X_v$ is also a binomial random variable, therefore if we set
$$
x_{\pm}=mp_k\left(1\pm \sqrt{\frac{5\ln n}{mp_k}}\right),
$$
then
by Chernoff bound~\eqref{Chernoff} 
$$
\Pra{x_-\le X_v \le x_+}=1-o\left(\frac{1}{n^2}\right).
$$
and $x_{\pm}=mp_k(1+o(1/\ln n))$.\\ 
Thus
\begin{align}
\nonumber n&\textstyle \Pra{Z_v=k-1}=\\ \label{RownanieWartoscOczekiwana}
&\quad=
\textstyle n\sum_{x=x_-}^{x_+}\Pra{Z_v=k-1|X_v=x}\Pra{X_v=x} + o\left(\frac{1}{n^2}\right) \ge\\ \nonumber
&\quad\ge\textstyle  n\binom{(n-1)x_-}{k-1}p_k^{k-1}(1-p_k)^{nx_+}\sum_{x=x_-}^{x_+}\Pra{X_v=x}=\\ \nonumber
&\quad=\textstyle n\frac{\left(nmp_k^2\left(1+o\left(\frac{1}{\ln n}\right)\right)\right)^{k-1}}{(k-1)!}
\exp\left(-nmp_k^2\left(1+o\left(\frac{1}{\ln n}\right)\right)\right)
\cdot\\ \nonumber
&\quad\textstyle 
\hspace{6.5cm}\cdot\Pra{x_-\le X_v \le x_+}=\\ \nonumber
&\quad=\textstyle \frac{1}{(k-1)!}\exp\left(\ln n + (k-1)\ln (nmp_k^2) - nmp_k^2 + o(1)\right)
\cdot\\ \nonumber
&\quad\textstyle 
\hspace{6.5cm}\cdot\Pra{x_-\le X_v \le x_+}=\\ \nonumber
&\quad=\textstyle \frac{1}{(k-1)!}\exp\left(-\omega+o(1)\right)(1+o(1))\to \infty.
\end{align}

\noindent Let $v,v'\in\V$ and
$
S=|W_{v}\cap W_{v'}|
$.
Notice that if $i\in \{0,1,2\}$ and $x,x'\in [x_-;x_++2]$, then uniformly over all $x,x'$
\begin{align*}
&\Pra{X_{v'}=x'+i,X_v=x+i,S=i}=\\
&\quad\textstyle=
\Pra{X_{v'}=x'+i}\Pra{X_v=x+i}\Pra{S=i|X_{v'}=x'+i,X_v=x+i}
=\\
&\quad\textstyle=
\binom{m}{x'+i}p_k^{x'+i}(1-p_k)^{m-x'-i}
\binom{m}{x+i}p_k^{x+i}(1-p_k)^{m-x-i}
\frac{\binom{x'+i}{i}\binom{m-x'-i}{x}}{\binom{m}{x+i}}
=\\
&\quad\textstyle=
(1+o(1))\left(\frac{mp_k}{x'}\right)^{i}\left(\frac{mp_k}{x}\right)^{i}
\Pra{X_{v'}=x'}\Pra{X_v=x}
\frac{1}{i!}\left(\frac{x\cdot x'}{m}\right)^i
=\\
&\quad\textstyle=
(1+o(1))\Pra{X_{v'}=x'}\Pra{X_v=x}\binom{m}{i}p_k^{2i}(1-p_k^{2})^{m-i}
=\\
&\quad\textstyle=
(1+o(1))\Pra{X_{v'}=x'}\Pra{X_v=x}\Pra{S=i}.
\end{align*}

\noindent Given $i\in \{0,1,2\}$ and $x,x'\in [x_-;x_++2]$ denote by $\mathcal{H}(x,x',i)$ event\linebreak $\{X_v=x+i,X_{v'}=x'+i,S=i\}$. Since for $1\le t\le k-1$, constant $k$ and $\alpha>1$
$$
\textstyle p_k^{-t}n^{t}\frac{1}{(nx)^t(nx')^t}=(1+o(1))\left(\frac{1}{nm^2p_k^3}\right)^t=(1+o(1))\frac{n^{t/2}}{m^{t/2}(\ln n)^{3t/2}}=o(1)
$$
and
$$
(1-p_k)^{i(n-1)}=(1+o(1)),
$$
 we have uniformly over all $x,x'\in [x_-;x_+ + 2]$
\begin{align*}
&\hspace{-0.5cm}\Pra{Z_v=k-1,Z_{v'}=k-1|\mathcal{H}(x,x',i)}=\\
&\quad\textstyle=\sum_{t=0}^{k-1}
\binom{i(n-1)}{t}p_k^{t}(1-p_k)^{i(n-1)-t}\cdot\\
&\quad\textstyle\hspace{1.5cm}\cdot\binom{(n-1)x}{k-1-t}p_k^{k-1-t}(1-p_k)^{(n-1)x - k + 1 + t}\cdot\\
&\quad\textstyle\hspace{1.5cm}\cdot 
\binom{(n-1)x'}{k-1-t}p_k^{k-1-t}(1-p_k)^{(n-1)x' - k + 1 + t}=\\ 
&\quad\textstyle=\binom{(n-1)x}{k-1}p_k^{k-1}(1-p_k)^{(n-1)x-k+1}\cdot\\
&\quad\textstyle\quad\cdot\binom{(n-1)x'}{k-1}p_k^{k-1}(1-p_k)^{(n-1)x'-k+1}\cdot\\
&\quad\textstyle\quad\cdot(1-p_k)^{i(n-1)}\cdot\\
&\quad\textstyle\quad\cdot\sum_{t=0}^{k-1}
(1-p_k)^{t}p_k^{-t}\binom{i(n-1)}{t}\frac{(k-1)_t}{((n-1)x-k+1+t)_t}\frac{(k-1)_t}{((n-1)x'-k+1+t)_t}=\\
&\quad\textstyle=(1+o(1))\Pra{Z_v=k-1|X_v=x}\Pra{Z_{v'}=k-1|X_{v'}=x}.
\end{align*}

Moreover $S$ has binomial distribution $\Bin{m}{p_k^2}$, therefore 
$$\textstyle \Pra{S\ge 3}=o(1/n^2).$$

\noindent Denote  $J=[x_-+2,x_+]$. Uniformly over all $x,x'\in J$
Therefore by Chernoff bound

\begin{multline*}
\Pra{X_v\notin J\text{ or } X_{v'}\notin J \text{ or } S\notin\{0,1,2\}}\le\\ \le
\Pra{X_v\notin J} + \Pra{X_v\notin J} + \Pra{S\ge 3} = o\left(\frac{1}{n^2}\right).
\end{multline*}

\noindent Finally by above calculation and \eqref{RownanieWartoscOczekiwana}

\begin{align*}
&\textstyle\hspace{-0.5cm}\Pra{Z_v=k-1,Z_{v'}=k-1}\le\\
&\textstyle\le\sum_{x=x_-}^{x_+}\sum_{x'=x_-}^{x_+}\sum_{i=0}^{2}
\Pra{Z_v=k-1,Z_{v'}=k-1|\mathcal{H}(x,x',i)}\cdot\\
&\textstyle\hspace{4.2cm}\cdot\Pra{X_{v'}=x'+i,X_v=x+i,S=i}+\\
&\textstyle\quad+\Pra{X_v\notin J\text{ or } X_{v'}\notin J \text{ or } S\notin\{0,1,2\}}\le\\
&\textstyle\le 
(1+o(1))
\left(\sum_{x=x_-}^{x_+}\Pra{Z_v=k-1|X_v=x}\Pra{X_v=x}\right)\cdot\\
&\textstyle\quad\cdot\left(\sum_{x'=x_-}^{x_+}\Pra{Z_{v'}=k-1|X_{v'}=x}\Pra{X_{v'}=x'}\right)\cdot\\
&\textstyle\quad\cdot\left(\sum_{i=0}^{2}\Pra{S_0=i}\right) + o\left(\frac{1}{n^2}\right)=\\
&\textstyle=(1+o(1))
\left(\Pra{Z_v=k-1}+o\left(\frac{1}{n^2}\right)\right)
\left(\Pra{Z_{v'}=k-1}+o\left(\frac{1}{n^2}\right)\right)
+\\
&\textstyle\quad+
o\left(\frac{1}{n^2}\right)=\\
&\textstyle=(1+o(1))
\Pra{Z_v=k-1}
\Pra{Z_{v'}=k-1}
+
o\left(\frac{1}{n^2}\right).  
\end{align*}

\end{proof}

\section{Proof of Lemma~\ref{LematCoupling}}\label{SectionCoupling}

We will begin the proof of Lemma~\ref{LematCoupling} by presenting auxiliary definitions and facts.

\subsection{An auxiliary graph $\G{M}$}
In the proof of the coupling's existence we will need an auxiliary graph. Let $M$ be a random variable with values in the set of positive integers (in the simplest case $M$ will be a given positive integer with probability one). By $\G{M}$ we will denote the random graph with the vertex set $\V$ and an edge set constructed by sampling $M$ times with repetition elements from the set of all two element subsets of $\V$. More precisely, in order to construct an edge set of $\G{M}$, first we choose the value of $M$ according to its probability distribution and then, given $M=t$, we sample $t$ times with repetition elements from the set of all two element subsets of $\V$. A subset $\{v,v'\}$ is an edge in $\G{M}$ if and only if it has been sampled at least once. For simplicity of notation if $M$ equals constant $t$ with probability one, has binomial or Poisson distribution we will write $\G{t}$, $\G{\Bin{\cdot}{\cdot}}$ or $\G{\Po{\cdot}}$, respectively.

\subsection{Coupling}

By the coupling $(G_1,G_2)$ of two random variables $G_1$ and $G_2$ we will mean a choice of the probability space on which we define a random vector $(G_1',G_2')$, such that $G_1'$ and $G_2'$ have the same distributions as $G_1$ and $G_2$, respectively. For simplicity of notation we will not differentiate between random variables $G_1',G_2'$ and $G_1,G_2$.

Let $G_1$ and $G_2$ be two random graphs. We will write
$$
G_1\coupling G_2\quad\text{ and }\quad G_1\coup G_2,
$$
if 
there exists a coupling $(G_1,G_2)$, such that under the coupling $G_1$ is a subgraph of $G_2$ with probability $1$ or $1-o(1)$, respectively. \\
Moreover we will write 
$$
G_1=G_2,
$$
if $G_1$ and $G_2$ have the same probability distribution (equivalently there exists a coupling $(G_1,G_2)$ such that $G_1=G_2$ with probability one.)

The facts stated below will be useful in the proof. 
A simple calculation shows (see \cite{GpEquivalence}) that in $\G{\Po{\lambda}}$ each edge appears independently with probability $1-\exp(-\lambda/{\textstyle \binom{n}{2}})$, therefore
\begin{fact}
\begin{equation}\label{RownanieGnGgwiazdka}
\G{\Po{\lambda}}=\Gn{1-\exp(-\lambda/{\textstyle \binom{n}{2}})}.
\end{equation}
\end{fact}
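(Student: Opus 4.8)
The plan is to verify directly that in $\G{\Po{\lambda}}$ the edge-indicator variables are mutually independent and that each potential edge is present with probability $1-\exp(-\lambda/\binom{n}{2})$; since this is precisely the defining property of $\Gn{1-\exp(-\lambda/\binom{n}{2})}$, the two distributions must coincide. Write $N=\binom{n}{2}$ for the number of potential edges, and for each potential edge $e=\{v,v'\}$ let $C_e$ denote the number of times $e$ is drawn during the $M$ samplings-with-repetition. By construction $e$ is an edge of $\G{\Po{\lambda}}$ exactly when $C_e\ge 1$, so the whole statement reduces to understanding the joint law of the count vector $(C_e)_e$.

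The key step is the Poisson splitting (thinning) property. Conditioning on $M=t$, the vector $(C_e)_e$ is multinomial with $t$ trials and equal cell probabilities $1/N$. Summing the conditional probabilities against the Poisson weights $\Pra{M=t}=e^{-\lambda}\lambda^{t}/t!$ and using $\sum_e c_e=t$, the multinomial coefficient cancels the factor $t!$, while $\lambda^{t}N^{-t}$ splits as $\prod_e(\lambda/N)^{c_e}$; combined with $e^{-\lambda}=\prod_e e^{-\lambda/N}$ this shows the joint pmf of $(C_e)_e$ factorizes into a product of $\Po{\lambda/N}$ pmf's. Hence the counts $(C_e)_e$ are mutually independent, each distributed as $\Po{\lambda/N}$.

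From here the conclusion is immediate. By independence of the $C_e$ the events $\{C_e\ge 1\}$, that is the edge-presence events, are mutually independent; and for each $e$ one has $\Pra{C_e\ge 1}=1-\Pra{C_e=0}=1-\exp(-\lambda/N)=1-\exp(-\lambda/\binom{n}{2})$. Thus $\G{\Po{\lambda}}$ has independent edges, each appearing with probability $1-\exp(-\lambda/\binom{n}{2})$, which is exactly the law of $\Gn{1-\exp(-\lambda/\binom{n}{2})}$, establishing \eqref{RownanieGnGgwiazdka}.

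I expect no serious obstacle here: the only point genuinely requiring care is the passage from ``a Poisson number of independent uniform samples'' to ``independent Poisson counts per edge'', i.e. correctly invoking the splitting property of the Poisson distribution. It is worth emphasising that it is precisely the Poissonization of $M$ that buys independence across edges — for a deterministic or binomially distributed $M$ the counts $C_e$ would be negatively correlated and the edge indicators would not be independent, so the exact equality of distributions would fail.
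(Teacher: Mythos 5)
Your proof is correct and takes essentially the same route as the paper, which simply asserts (citing a ``simple calculation'' from its reference on $\G{\Po{\lambda}}$) that each edge appears independently with probability $1-\exp(-\lambda/\binom{n}{2})$; your Poissonization/splitting argument is exactly that calculation carried out in full, showing the per-edge counts are i.i.d.\ $\Po{\lambda/\binom{n}{2}}$. The closing remark that independence genuinely requires the Poisson randomization of $M$ (and fails for deterministic or binomial $M$) is a correct and worthwhile observation.
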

\noindent Since it is simple to construct suitable couplings we state the following facts without proof.
\begin{fact}\label{FaktSumaGgwiazdka}
Let $M_1\ldots M_m$ be independent random variables, then
a sum of $m$ independent graphs: 
$$\bigcup_{i=1}^m\G{M_i}=\G{\sum_{i=1}^{m}M_i}.$$
\end{fact}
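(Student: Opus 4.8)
The statement to be proved is Fact~\ref{FaktSumaGgwiazdka}: that for independent random variables $M_1,\dots,M_m$, the union of independent copies $\bigcup_{i=1}^m \G{M_i}$ has the same distribution as $\G{\sum_{i=1}^m M_i}$. The natural route is to exhibit an explicit coupling witnessing distributional equality, working directly from the sampling-with-repetition definition of $\G{M}$. First I would recall that $\G{M_i}$ is constructed by drawing $M_i$ samples (with repetition) uniformly from the set $\binom{\V}{2}$ of two-element subsets of $\V$, an edge being present iff it is sampled at least once. The union $\bigcup_{i=1}^m\G{M_i}$ of $m$ \emph{independent} such graphs therefore places an edge iff it is sampled at least once across \emph{any} of the $m$ independent draws.

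\textbf{Key steps.} The plan is to realise both sides on a common probability space via a single long sequence of i.i.d. uniform samples from $\binom{\V}{2}$. Concretely, I would first generate the values $M_1=t_1,\dots,M_m=t_m$ from their joint (product) distribution, and set $N=\sum_{i=1}^m t_i$. Conditioned on these values, I would draw a sequence of $N$ independent uniform samples from $\binom{\V}{2}$; the first $t_1$ of them define the edge set of the copy $\G{M_1}$, the next $t_2$ define $\G{M_2}$, and so on. Because the samples are i.i.d.\ and the blocks are disjoint, the $m$ graphs so defined are mutually independent and each $\G{M_i}$ has its correct conditional distribution, so this construction is a faithful realisation of the left-hand side. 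The crucial observation is then that a given pair $\{v,v'\}$ is an edge of the union iff it appears in \emph{at least one} of the $N$ samples across all blocks. On the other hand, by the very definition of $\G{\sum_i M_i}$, having set $M=\sum_i M_i$ with the same distribution and drawn $M=N$ i.i.d.\ uniform samples, the pair $\{v,v'\}$ is an edge iff it appears at least once among those $N$ samples. Since concatenating the $m$ blocks gives exactly a single run of $N$ i.i.d.\ uniform samples, the two edge sets coincide pointwise on this coupled space. I would conclude that the coupling gives equality with probability one, which by the paper's convention is precisely the statement $\bigcup_{i=1}^m\G{M_i}=\G{\sum_{i=1}^m M_i}$.

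\textbf{Main obstacle.} There is no serious analytic difficulty here; the content is entirely bookkeeping about the sampling model, which is why the paper states it without proof. The one point requiring care is the handling of the randomness of the $M_i$: one must check that marginalising the product distribution of $(M_1,\dots,M_m)$ and setting $M=\sum_i M_i$ genuinely reproduces the distribution of $M$ used on the right-hand side, and that independence of the blocks survives the conditioning. Once it is noted that conditioning on $(t_1,\dots,t_m)$ splits the $N$ samples into independent blocks and that the union operation is insensitive to how the samples are grouped, the identity follows immediately. Thus the heart of the argument is simply that \emph{sampling with repetition and taking ``appears at least once'' is associative under concatenation of independent sample runs}, and the equality of distributions is then a one-line consequence.
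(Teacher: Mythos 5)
Your proof is correct, and it is exactly the ``suitable coupling'' the paper alludes to when it states this fact without proof: concatenating the $m$ independent blocks of i.i.d.\ uniform samples from $\binom{\V}{2}$ realises both sides on one probability space, with the union being insensitive to how the samples are grouped. You also correctly handle the only delicate point, namely that the product law of $(M_1,\dots,M_m)$ and the conditional independence of the blocks make the construction faithful to both the left-hand and right-hand sides.
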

\begin{fact}\label{FaktCouplingGnp}
If $\hat{p}\le \hat{p}'$, then
\begin{equation*}
\Gn{\hat{p}}\coupling  \Gn{\hat{p}'}.
\end{equation*}
\end{fact}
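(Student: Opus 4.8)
The plan is to realize both random graphs on a single probability space by attaching one independent uniform random variable to each potential edge and then reading off the two edge sets by thresholding at the respective probabilities. Concretely, for every two--element subset $\{v,v'\}\subseteq\V$ I would introduce an independent random variable $U_{\{v,v'\}}$ distributed uniformly on $[0,1]$, and declare $\{v,v'\}$ to be an edge of $G_1$ exactly when $U_{\{v,v'\}}\le \hat{p}$, and an edge of $G_2$ exactly when $U_{\{v,v'\}}\le \hat{p}'$.

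First I would verify that the marginal distributions are correct. Since the variables $U_{\{v,v'\}}$ are independent and $\Pra{U_{\{v,v'\}}\le \hat{p}}=\hat{p}$, the edges of $G_1$ appear independently, each with probability $\hat{p}$, so $G_1$ has the distribution of $\Gn{\hat{p}}$; the identical argument with threshold $\hat{p}'$ shows that $G_2$ has the distribution of $\Gn{\hat{p}'}$. Hence $(G_1,G_2)$ is a genuine coupling of $\Gn{\hat{p}}$ and $\Gn{\hat{p}'}$ in the sense defined above.

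Next I would check the inclusion. Because $\hat{p}\le\hat{p}'$, the event $\{U_{\{v,v'\}}\le\hat{p}\}$ is contained in the event $\{U_{\{v,v'\}}\le\hat{p}'\}$ for every pair $\{v,v'\}$, so any edge present in $G_1$ is automatically present in $G_2$. Thus $G_1\subseteq G_2$ holds with probability $1$ under this coupling, which is precisely the assertion $\Gn{\hat{p}}\coupling\Gn{\hat{p}'}$.

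There is essentially no real obstacle here: the entire content is the observation that a common uniform threshold is monotone in the threshold level. The only point requiring a moment's care is confirming that the two readings taken from the \emph{same} family of $U$--variables simultaneously reproduce the correct \emph{independent} marginals, and this follows immediately from the independence of the family $\{U_{\{v,v'\}}\}$ together with the fact that each marginal depends on only one member of that family.
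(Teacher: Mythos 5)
Your proof is correct: the monotone coupling via independent uniform variables $U_{\{v,v'\}}$ thresholded at $\hat{p}$ and $\hat{p}'$ gives the right marginals and the almost-sure inclusion $G_1\subseteq G_2$. The paper states this fact without proof, noting only that the coupling is simple to construct, and your construction is precisely the standard one being alluded to, so there is nothing to add.
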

\begin{fact}\label{FaktCouplingGnmp}
If $p\le p'$, then
\begin{equation*}
\Gnm{p}\coupling  \Gnm{p'}.
\end{equation*}
\end{fact}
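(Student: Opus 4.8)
The plan is to construct the coupling one level below the graph, on the underlying feature-assignment variables, rather than directly on the edge sets, and then to exploit the fact that the intersection-graph adjacency relation is monotone in the feature sets. Recall that $\Gnm{p}$ is completely determined by the family $(W_v)_{v\in\V}$ of feature sets, where each $w\in\W$ lies in $W_v$ independently with probability $p$; equivalently, by the bipartite graph $\mathcal{B}(n,m,p)$ with bipartition $(\V,\W)$ in which each pair $(v,w)\in\V\times\W$ is present independently with probability $p$.

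First I would set up the standard monotone (quantile) coupling of the two bipartite structures. For each pair $(v,w)$ draw a single uniform random variable $U_{v,w}$ on $[0,1]$, independently over all pairs, and declare $w\in W_v$ (the feature set generating $\Gnm{p}$) whenever $U_{v,w}\le p$, and $w\in W_v'$ (the feature set generating $\Gnm{p'}$) whenever $U_{v,w}\le p'$. Since the $U_{v,w}$ are independent and uniform, the marginals are correct: each $W_v$ has the law of a $\Gnm{p}$ feature set and each $W_v'$ that of a $\Gnm{p'}$ feature set, so the vector of feature sets realises $\Gnm{p}$ and $\Gnm{p'}$ on the common probability space. Because $p\le p'$, the event $U_{v,w}\le p$ implies $U_{v,w}\le p'$, and hence $W_v\subseteq W_v'$ for every $v\in\V$ with probability one.

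It then remains only to check that this inclusion of feature sets forces inclusion of edge sets. If $\{v_1,v_2\}$ is an edge of $\Gnm{p}$ then $W_{v_1}\cap W_{v_2}\neq\emptyset$; since $W_{v_1}\subseteq W_{v_1}'$ and $W_{v_2}\subseteq W_{v_2}'$, we obtain $W_{v_1}'\cap W_{v_2}'\supseteq W_{v_1}\cap W_{v_2}\neq\emptyset$, so $\{v_1,v_2\}$ is an edge of $\Gnm{p'}$. Thus $\Gnm{p}\subseteq\Gnm{p'}$ holds under the coupling with probability one, which is precisely the assertion $\Gnm{p}\coupling\Gnm{p'}$.

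I expect essentially no hard step here. The one point that deserves to be stated explicitly is the monotonicity observation that adjacency in a random intersection graph is an increasing function of the feature sets (enlarging any $W_v$ can only create edges, never delete them), and this is immediate from the definition. The argument is short precisely because the coupling is carried out on the independent feature-assignment variables, where the elementary quantile coupling applies directly, rather than on the dependent edge indicators of the intersection graph itself.
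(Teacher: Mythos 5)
Your proof is correct, and it is exactly the routine argument the paper has in mind: the paper states this fact without proof ("Since it is simple to construct suitable couplings we state the following facts without proof"), and the intended construction is precisely your quantile coupling on the independent feature-assignment variables combined with the monotonicity of intersection-graph adjacency in the feature sets. Nothing is missing; your write-up simply supplies the details the paper deliberately omits.
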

\begin{fact}\label{FaktCouplingGgwiazdka}
(i) 
Let $M_n$ be a sequence of random variables and let $a_n$ be a sequence of numbers. If 
\begin{equation*}
\Pra{M_n\ge a_n}=o(1)\quad (\Pra{M_n\le a_n}=o(1)), 
\end{equation*}
then
\begin{equation*}
\G{M_n}\coup \G{a_n}\quad (\G{a_n}\coup \G{M_n}). 
\end{equation*}
(ii) If a random variable $M$ is stochastically dominated by $M'$ (i.e. $M\prec M'$), then 
$$
\G{M}\coupling \G{M'}.
$$
\end{fact}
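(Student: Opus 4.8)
The plan is to prove both parts with a single, uniform coupling: I realise $\G{M}$ and $\G{M'}$ (respectively $\G{M_n}$ and $\G{a_n}$) on one probability space built from the \emph{same} infinite sequence of sampled pairs, so that the resulting edge set is monotone in the number of draws. Concretely, let $e_1,e_2,\dots$ be i.i.d.\ random variables, each uniform over the set of all two-element subsets of $\V$, and for a nonnegative integer $t$ let $H_t$ be the graph on $\V$ with edge set $\{e_1,\dots,e_t\}$ (a pair being an edge iff it occurs among the first $t$ draws). By the very definition of $\G{\cdot}$, if $N$ is any integer-valued random variable independent of $(e_i)_{i\ge 1}$, then $H_N\stackrel{d}{=}\G{N}$: conditioning on $N=t$ leaves $e_1,\dots,e_t$ as $t$ genuine independent draws. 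This is the only distributional fact needed, and the crucial structural point is the deterministic monotonicity $t\le t'\Rightarrow H_t\subseteq H_{t'}$.

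For part (ii), Strassen's characterisation of stochastic domination gives, from $M\prec M'$, a coupling $(\tilde M,\tilde M')$ with $\tilde M\stackrel{d}{=}M$, $\tilde M'\stackrel{d}{=}M'$ and $\tilde M\le \tilde M'$ almost surely. I would carry this pair on a probability space also supporting the sequence $(e_i)$ chosen independently of $(\tilde M,\tilde M')$. Then $H_{\tilde M}\stackrel{d}{=}\G{M}$ and $H_{\tilde M'}\stackrel{d}{=}\G{M'}$, while $\tilde M\le\tilde M'$ forces $H_{\tilde M}\subseteq H_{\tilde M'}$ with probability one. Hence $\G{M}\coupling\G{M'}$.

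For part (i) I use the same construction with $M_n$ independent of $(e_i)$, setting $G_1=H_{M_n}$ and $G_2=H_{a_n}$ (interpreting $a_n$ as a positive integer, say $\lfloor a_n\rfloor$). These have distributions $\G{M_n}$ and $\G{a_n}$. On the event $\{M_n< a_n\}$ we have $G_1\subseteq G_2$ deterministically, and by hypothesis $\Pra{M_n< a_n}=1-\Pra{M_n\ge a_n}=1-o(1)$, so $G_1\subseteq G_2$ holds with probability $1-o(1)$, which is exactly $\G{M_n}\coup\G{a_n}$. The parenthetical case is symmetric: take $G_1=H_{a_n}$, $G_2=H_{M_n}$; the event $\{a_n\le M_n\}$ has probability at least $1-\Pra{M_n\le a_n}=1-o(1)$, and on it $G_1\subseteq G_2$, giving $\G{a_n}\coup\G{M_n}$.

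The argument is short, so there is no genuine analytic difficulty; the single point that must be handled with care is the bookkeeping that makes the marginals correct. Specifically, the draw sequence $(e_i)$ has to be taken independent of the counting variable (the pair $(\tilde M,\tilde M')$ in part (ii), and $M_n$ in part (i)), since this independence is precisely what guarantees that $H_N$ reproduces the law of $\G{N}$. Once the joint space is set up correctly, everything reduces to the deterministic monotonicity of $H_t$ in $t$ together with the elementary complement bounds above.
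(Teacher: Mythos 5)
Your proof is correct: realizing both graphs from a single i.i.d.\ sequence of pair-draws $(e_i)$, with the counting variable independent of the draws, gives the correct marginals; the deterministic monotonicity $t\le t'\Rightarrow H_t\subseteq H_{t'}$ then yields part (ii) via the quantile (Strassen) coupling of $M\prec M'$, and part (i) by restricting to the event $\{M_n<a_n\}$ (resp.\ $\{a_n\le M_n\}$) of probability $1-o(1)$. The paper states this fact explicitly without proof (``it is simple to construct suitable couplings''), and your construction is exactly the natural one being alluded to, including the careful handling of the independence needed for $H_N\stackrel{d}{=}\G{N}$ and of non-integer $a_n$.
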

\begin{fact}\label{FaktCouplingIndependent}
Let $(G_i)_{i=1,\ldots,m}$ and $(G'_i)_{i=1,\ldots,m}$ be sequences of independent random graphs. If
$$
G_i\coupling G_i', \text{ for all }i=1,\ldots,m
$$
then
$$
\bigcup_{i=1}^{m}G_i\coupling \bigcup_{i=1}^{m}G'_i. 
$$  
\end{fact}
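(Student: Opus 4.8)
\textbf{Plan for Fact~\ref{FaktCouplingIndependent}.}
The statement asserts that monotone coupling is preserved under taking unions of independent families: if each $G_i$ is coupled inside $G_i'$, then $\bigcup_i G_i$ is coupled inside $\bigcup_i G_i'$. The plan is to build the product coupling by gluing together the individual couplings on independent coordinates, and then to argue that the subgraph relation is stable under union.

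First I would invoke, for each $i=1,\ldots,m$, the hypothesis $G_i\coupling G_i'$ to fix a coupling: a probability space $\Omega_i$ carrying a random vector $(\widehat{G}_i,\widehat{G}_i')$ whose marginals agree in distribution with $G_i$ and $G_i'$ respectively, and on which $\widehat{G}_i\subseteq\widehat{G}_i'$ holds with probability one. Next I would form the product space $\Omega=\prod_{i=1}^m\Omega_i$ with the product measure, and on $\Omega$ consider the vector $\left(\bigcup_{i=1}^m\widehat{G}_i,\ \bigcup_{i=1}^m\widehat{G}_i'\right)$. Because the $\Omega_i$ are combined as a product, the coordinates $(\widehat{G}_i,\widehat{G}_i')$ remain mutually independent across $i$ under the product measure; this is the step that uses the independence hypothesis on the two families.

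The two marginal distributions are then identified as follows. Since $\widehat{G}_i$ has the law of $G_i$ and the $\widehat{G}_i$ are independent, the union $\bigcup_i\widehat{G}_i$ has the same law as $\bigcup_i G_i$ (the distribution of a union of independent graphs on the common vertex set $\V$ depends only on the marginal laws and their independence), and likewise $\bigcup_i\widehat{G}_i'$ has the law of $\bigcup_i G_i'$. Finally, the almost-sure subgraph relations $\widehat{G}_i\subseteq\widehat{G}_i'$ can be intersected over the finitely many indices $i=1,\ldots,m$: on the event of full measure where all $m$ of them hold simultaneously, every edge of $\bigcup_i\widehat{G}_i$ lies in some $\widehat{G}_i$, hence in $\widehat{G}_i'$, hence in $\bigcup_i\widehat{G}_i'$. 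Thus $\bigcup_i\widehat{G}_i\subseteq\bigcup_i\widehat{G}_i'$ with probability one, which exhibits the desired coupling and proves $\bigcup_i G_i\coupling\bigcup_i G_i'$.

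There is no serious obstacle here: the result is essentially a bookkeeping exercise, and the paper itself flags it (together with Facts~\ref{FaktSumaGgwiazdka}--\ref{FaktCouplingGgwiazdka}) as one of those stated ``without proof'' because ``it is simple to construct suitable couplings.'' The only point requiring mild care is the marginal identification: one must be sure that passing to the product measure does not disturb the individual marginals (it does not, by the definition of product measure) and that the union operation on graphs is a measurable function of the edge sets, so that the pushforward laws are exactly those of $\bigcup_i G_i$ and $\bigcup_i G_i'$. Everything else is immediate from the finiteness of the index set and the definition of $\coupling$ given in Section~\ref{SectionCoupling}.
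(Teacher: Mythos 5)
Your argument is correct and is exactly the standard product-coupling construction that the paper has in mind: the paper itself offers no details, merely noting that the proof is analogous to that of Fact~2 in the cited equivalence paper, which proceeds by the same gluing of the individual couplings on a product space. Your explicit verification of the marginals (via independence of the two families) and the intersection of the finitely many almost-sure subgraph events fills in precisely the bookkeeping the paper leaves to the reader.
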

\begin{proof}
The proof is analogous to the proof of  Fact 2 in \cite{GpEquivalence2}.
\end{proof}

\begin{fact}\label{FaktCouplingWlasnosci}
Let $G_1$ and $G_2$ be two random graph models and $\mathcal{A}$ be an increasing property.
Let 
\begin{equation}\label{RownanieG1G2}
G_1\coupling G_2 \quad\text{ or }\quad G_1\coup G_2.
\end{equation}
If
$$
\Pra{G_1\in \mathcal{A}}\to 1,
$$ 
then
$$
\Pra{G_2\in \mathcal{A}}\to 1.
$$
\end{fact}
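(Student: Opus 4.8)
The plan is to work entirely on the common probability space furnished by the coupling. By definition of $\coupling$ and $\coup$, there exists a random vector $(G_1,G_2)$ on a single probability space whose two marginals agree in distribution with the given models; since $\mathcal{A}$ is closed under isomorphism, the events $\{G_1\in\mathcal{A}\}$ and $\{G_2\in\mathcal{A}\}$ carry the same probabilities on this space as in the original models. Hence it suffices to bound $\Pra{G_2\in\mathcal{A}}$ computed under the coupling.

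The single structural input I would use is that $\mathcal{A}$ is increasing: on the event $\{G_1\subseteq G_2\}$ (that is, $E(G_1)\subseteq E(G_2)$), whenever $G_1\in\mathcal{A}$ we also have $G_2\in\mathcal{A}$. This yields the inclusion of events
$$
\{G_1\in\mathcal{A}\}\cap\{G_1\subseteq G_2\}\subseteq\{G_2\in\mathcal{A}\},
$$
and therefore
$$
\Pra{G_2\in\mathcal{A}}\ge \Pra{\{G_1\in\mathcal{A}\}\cap\{G_1\subseteq G_2\}}.
$$

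First I would treat the case $G_1\coupling G_2$. Here the coupling guarantees $\Pra{G_1\subseteq G_2}=1$, so the right-hand side above equals $\Pra{G_1\in\mathcal{A}}$, which tends to $1$ by hypothesis; this settles the claim. For the case $G_1\coup G_2$ the subgraph relation holds only with probability $1-o(1)$, so I would apply the elementary bound $\Pra{X\cap Y}\ge \Pra{X}-\Pra{Y^c}$ with $X=\{G_1\in\mathcal{A}\}$ and $Y=\{G_1\subseteq G_2\}$, obtaining
$$
\Pra{G_2\in\mathcal{A}}\ge \Pra{G_1\in\mathcal{A}}-\Pra{G_1\not\subseteq G_2}=(1-o(1))-o(1)\to 1.
$$

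There is no genuine obstacle: the statement is a direct consequence of the definitions, and the only point requiring minor care is the $\coup$ case, where one must not assume the inclusion $G_1\subseteq G_2$ holds surely but instead absorb the $o(1)$ failure probability through the union-type bound above. The argument also makes transparent why monotonicity of $\mathcal{A}$ is essential: without it the event inclusion $\{G_1\in\mathcal{A}\}\cap\{G_1\subseteq G_2\}\subseteq\{G_2\in\mathcal{A}\}$ would fail, and the coupling would carry no information from $G_1$ to $G_2$.
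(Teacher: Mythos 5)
Your proof is correct and is essentially the paper's own argument: both rest on the event inclusion $\{G_1\in\mathcal{A}\}\cap\{G_1\subseteq G_2\}\subseteq\{G_2\in\mathcal{A}\}$ coming from monotonicity, followed by the Bonferroni-type bound $\Pra{X\cap Y}\ge \Pra{X}+\Pra{Y}-1$, which is exactly your $\Pra{X\cap Y}\ge\Pra{X}-\Pra{Y^c}$ (the paper merely phrases the first step via conditioning on $\mathcal{H}=\{G_1\subseteq G_2\}$ and treats both coupling cases in one computation). No gaps; nothing further needed.
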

\begin{proof}
Under the coupling $(G_1,G_2)$ given by \eqref{RownanieG1G2} define event 
$$
\mathcal{H}:=\{G_1\subseteq G_2\}.
$$
Then 
\begin{align*}
1\ge \Pra{G_2\in \mathcal{A}}
&\ge\Pra{ G_2\in \mathcal{A}|\mathcal{H}\}\Pr\{\mathcal{H}}\ge
\\
&\ge\Pra{ G_1\in \mathcal{A}|\mathcal{H}\}\Pr\{\mathcal{H}}=
\\
&=\Pra{ \{G_1\in \mathcal{A}\}\cap\mathcal{H}}=\\
&=\Pra{ G_1\in \mathcal{A}}+\Pra{ \mathcal{H}}-\Pra{ \{G_1\in \mathcal{A}\}\cup\mathcal{H}}\ge\\
&\ge \Pra{ G_1\in \mathcal{A}}+\Pra{\mathcal{H}}-1=1+o(1).
\end{align*}
\end{proof}

\subsection{Total variation distance}

We will also use the notion of the total variation distance. Let $M_1$ and $M_2$ be two random variables with values in a countable set $A$, by the total variation distance we mean
\begin{multline*}
\dtv{M_1}{M_2}=\max_{A'\subseteq A} |\Pra{M_1\in A'}-\Pra{M_2\in A'}|=\\
\sum_{a\in A}|\Pra{M_1=a}-\Pra{M_2=a}|.
\end{multline*}
The following facts were shown in \cite{GpEquivalence} (see also \cite{GpEquivalence2}).
\begin{fact}\label{FaktDtv}
Let $M_1$ have binomial distribution $\Bin{m}{\hat{p}}$ and $M_2$ have Poisson distribution $\Po{m\hat{p}}$, then
\begin{multline*}
\dtv{\G{M_1}}{\Gn{1-\exp\left(-m\hat{p}/{\textstyle \binom{n}{2}}\right)}}=\\
=\dtv{\G{M_1}}{\G{M_2}}
\le 2\dtv{M_1}{M_2}\le 2\hat{p}.
\end{multline*}
\end{fact}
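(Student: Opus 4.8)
The plan is to read the displayed chain as three independent claims and dispatch them in order: the equality, the contraction inequality $\dtv{\G{M_1}}{\G{M_2}}\le 2\dtv{M_1}{M_2}$, and the Poisson--approximation bound $\dtv{M_1}{M_2}\le\hat{p}$. For the equality I would simply note that $M_2$ has distribution $\Po{m\hat{p}}$, so applying \eqref{RownanieGnGgwiazdka} with $\lambda=m\hat{p}$ gives the identity of distributions $\G{M_2}=\G{\Po{m\hat{p}}}=\Gn{1-\exp(-m\hat{p}/\binom{n}{2})}$. Since $\G{M_2}$ and $\Gn{1-\exp(-m\hat{p}/\binom{n}{2})}$ are literally the same law, they are at the same total variation distance from $\G{M_1}$, which is exactly the asserted equality.

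For the contraction inequality the key observation is that $\G{M}$ depends on $M$ only through a fixed randomized rule (sample $M$ edges with repetition), so passing from the law of $M$ to the law of $\G{M}$ cannot increase total variation distance. I would make this explicit through the mixture representation $\Pra{\G{M_i}=G}=\sum_{t}\Pra{M_i=t}\Pra{\G{t}=G}$: subtracting the two mixtures, applying the triangle inequality inside the sum over graphs $G$, and using $\sum_G\Pra{\G{t}=G}=1$ collapses $\dtv{\G{M_1}}{\G{M_2}}$ to $\sum_t|\Pra{M_1=t}-\Pra{M_2=t}|=\dtv{M_1}{M_2}$. Equivalently, one may couple the two graphs by drawing a single i.i.d.\ sequence of sampled edges and taking its first $M_1$ (respectively $M_2$) entries, combined with a maximal coupling of $M_1$ and $M_2$, so that the graphs agree whenever $M_1=M_2$ and hence $\Pra{\G{M_1}\ne\G{M_2}}\le\Pra{M_1\ne M_2}$. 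Either route gives the bound with room to spare; the stated constant $2$ only reconciles the sum-normalisation used for the distance here, so it is harmless.

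Finally I would bound $\dtv{M_1}{M_2}=\dtv{\Bin{m}{\hat{p}}}{\Po{m\hat{p}}}$ by a standard Poisson approximation. Writing $M_1=\sum_{i=1}^m B_i$ with $B_i$ independent $\Bin{1}{\hat{p}}$ and $M_2=\sum_{i=1}^m P_i$ with $P_i$ independent $\Po{\hat{p}}$, a maximal coupling of each pair gives $\Pra{B_i\ne P_i}=\hat{p}(1-e^{-\hat{p}})\le\hat{p}^2$, so a union bound over the $m$ coordinates already yields $\dtv{M_1}{M_2}\le m\hat{p}^2$. The sharper Le Cam / Barbour--Hall estimate, which carries the extra factor $(1-e^{-m\hat{p}})/(m\hat{p})$, removes the spurious $m$ and produces the clean bound $\dtv{M_1}{M_2}\le(1-e^{-m\hat{p}})\hat{p}\le\hat{p}$.

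The equality and the final Poisson estimate are routine. The step I expect to be the real point is the contraction inequality: one must argue that replacing the mixing law $M$ by a nearby law perturbs the law of $\G{M}$ by no more than the perturbation of $M$ itself, that is, a data-processing property of total variation under the randomized map $M\mapsto\G{M}$. Once this is phrased via the mixture identity or the common-edge-sequence coupling, everything else is bookkeeping.
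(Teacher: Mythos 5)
The paper itself offers no proof of this fact: it is quoted from \cite{GpEquivalence} (see also \cite{GpEquivalence2}), so there is no internal argument to compare yours against. Your proof is correct and self-contained, and it is essentially the argument underlying the cited result. The equality is indeed immediate from the paper's Fact~1 applied with $\lambda=m\hat{p}$, since $\G{M_2}$ and $\Gn{1-\exp(-m\hat{p}/\binom{n}{2})}$ are then the same distribution, hence equidistant from $\G{M_1}$. Your contraction step is a data-processing inequality for the randomized map $t\mapsto\G{t}$; the mixture computation is valid and in fact yields the constant $1$, so the stated constant $2$ is slack, and your observation that it merely reconciles normalisations of $d_{TV}$ (the paper's displayed definition conflates the max, which equals half the sum, with the sum itself) is the right reading. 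The alternative common-edge-sequence coupling also works, provided you take the maximal coupling of $(M_1,M_2)$ independent of the shared edge sequence, so that $\{M_1=M_2\}$ forces the two graphs to coincide. The only point of genuine reliance is the final inequality: as you yourself note, the naive union bound $m\hat{p}^2$ is insufficient when $m\hat{p}\gg 1$, so the bound $\dtv{M_1}{M_2}\le\hat{p}$ really does rest on the Le Cam/Barbour--Hall estimate carrying the factor $(1-e^{-m\hat{p}})/(m\hat{p})$, which you quote rather than prove; that is a classical theorem, and the same reliance is implicit in the paper's own citation, so this is an acceptable black box rather than a gap.
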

\begin{fact}\label{FaktDtvWlasnosci}
Let $G_1$ and $G_2$ be two random graphs, $a\in [0;1]$ and $\mathcal{A}$ be any graph property. If
$$
\dtv{G_1}{G_2}=o(1),
$$  
then
$$
\Pra{G_1\in\mathcal{A}}\to a\quad\text{ iff }\quad\Pra{G_2\in\mathcal{A}}\to a.
$$
\end{fact}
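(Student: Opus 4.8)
The plan is to reduce the statement directly to the definition of the total variation distance, from which it is essentially immediate. The key observation is that a graph property $\mathcal{A}$ is nothing more than a particular subset of the set $A$ of all graphs with vertex set $\V$; since for each fixed $n$ this set is finite, hence countable, the definition of $\dtv{G_1}{G_2}$ used in the previous subsection applies, and $\mathcal{A}$ is an admissible choice for the set $A'$ appearing in the maximum.

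First I would recall that, by definition,
$$
\dtv{G_1}{G_2}=\max_{A'\subseteq A}\left|\Pra{G_1\in A'}-\Pra{G_2\in A'}\right|,
$$
where the maximum ranges over all subsets $A'$ of the (finite) set of graphs on $\V$. Taking $A'=\mathcal{A}$ in this maximum immediately yields
$$
\left|\Pra{G_1\in\mathcal{A}}-\Pra{G_2\in\mathcal{A}}\right|\le \dtv{G_1}{G_2}=o(1).
$$
Thus the two sequences of probabilities differ by $o(1)$. Consequently $\Pra{G_1\in\mathcal{A}}\to a$ forces $\Pra{G_2\in\mathcal{A}}\to a$, and conversely; both directions of the 'iff' follow simultaneously from the single inequality, the argument being symmetric in $G_1$ and $G_2$.

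I do not expect any genuine obstacle here, as the entire content is a direct substitution into the defining formula for $\dtv{\cdot}{\cdot}$. The only point meriting a word of care is that the definition of total variation distance presupposes values in a countable set; one should therefore note explicitly that, for each fixed $n$, the random graphs $G_1$ and $G_2$ take values in the finite set of graphs on $\V$, so that both the definition and the resulting bound are legitimately applicable. Everything else reduces to observing that convergence of one probability sequence to $a$ is equivalent to convergence of the other once their difference is known to vanish.
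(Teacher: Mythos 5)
Your proof is correct. The paper itself does not prove this fact at all --- it is stated with a pointer to an external reference (``shown in \cite{GpEquivalence}'') --- so your self-contained argument is exactly what a proof here should be: take $A'=\mathcal{A}$ (more precisely, $\mathcal{A}$ intersected with the finite set of graphs on $\V$, the set on which $G_1$ and $G_2$ take their values) in the maximum defining $\dtv{G_1}{G_2}$, conclude $\left|\Pra{G_1\in\mathcal{A}}-\Pra{G_2\in\mathcal{A}}\right|=o(1)$, and note that two sequences whose difference vanishes have the same limit, if any. Your care about countability/finiteness of the value space is the right (and only) point of rigor to check, and it resolves exactly as you say.
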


\subsection{Chernoff bound for Poisson distribution}

Let  $X_n$ and $X_n'$ be random variables with binomial $\Bin{\lambda_n n^{i+1}}{1/n^{i+1}})$ and Poisson $\Po{\lambda_n}$ distribution, respectively. Using fact that $\dtv{X_n}{X_n'}\le 1/n^{i+1}=o(1/n^{i})$ and \eqref{Chernoff} we get the following lemma. 
\begin{lem}
Let $X'_n$ be a sequence of random variables with Poisson distribution $\Po{\lambda_n}$ and $t_n>0$.\\
Then
\begin{equation}\label{ChernoffPoisson}
\textstyle \Pra{|X'_n-\lambda_n|\ge t_n}\le 2\exp\left(-\frac{3 t_n^2}{2 (3\lambda_n + t_n)}\right)+o\left(\frac{1}{n^{i}}\right).
\end{equation}
\end{lem}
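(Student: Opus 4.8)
The plan is to prove the Poisson tail bound by transporting the binomial Chernoff bound \eqref{Chernoff} across the small total variation gap between $X'_n\sim\Po{\lambda_n}$ and the companion binomial $X_n\sim\Bin{\lambda_n n^{i+1}}{1/n^{i+1}}$ introduced just above the statement. The first observation I would record is that this binomial has mean exactly $\E X_n=\lambda_n n^{i+1}\cdot n^{-(i+1)}=\lambda_n$, so that the right-hand side of \eqref{Chernoff} applied to $X_n$ with the same $t_n$ produces precisely the exponential factor $2\exp\left(-\frac{3t_n^2}{2(3\lambda_n+t_n)}\right)$ appearing in \eqref{ChernoffPoisson}, with no adjustment of the exponent needed.

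Next I would exploit that both $X_n$ and $X'_n$ take values in the same countable set, the nonnegative integers, so the event in question is $A'=\{a:|a-\lambda_n|\ge t_n\}$, a subset of that support. By the definition of the total variation distance recalled in Section~\ref{SectionCoupling}, $|\Pra{X'_n\in A'}-\Pra{X_n\in A'}|\le\dtv{X_n}{X'_n}$, and the estimate $\dtv{X_n}{X'_n}\le 1/n^{i+1}$ stated in the preamble (the standard Poisson approximation bound, in the spirit of Fact~\ref{FaktDtv}) then gives
\begin{equation*}
\Pra{|X'_n-\lambda_n|\ge t_n}\le\Pra{|X_n-\lambda_n|\ge t_n}+\frac{1}{n^{i+1}}.
\end{equation*}
Combining this with the Chernoff estimate from the previous step and absorbing $1/n^{i+1}=o(1/n^{i})$ into the error term yields exactly \eqref{ChernoffPoisson}.

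There is essentially no deep obstacle here; the argument is a routine transfer of a concentration inequality through a total variation coupling. The only points demanding care are bookkeeping ones: ensuring the approximating binomial is chosen with the correct number of trials so that its mean matches $\lambda_n$ \emph{exactly} (otherwise $\E X_n$ would replace $\lambda_n$ in the exponent and one would need an extra estimate to compare the two exponents), and confirming that the discarded term $1/n^{i+1}$ is genuinely of smaller order than the advertised $o(1/n^{i})$. A minor technicality I would flag is that $\lambda_n n^{i+1}$ need not be an integer; I would handle this by replacing it with $\lceil\lambda_n n^{i+1}\rceil$, which perturbs both the mean and the total variation bound by an amount that is again absorbed into $o(1/n^{i})$.
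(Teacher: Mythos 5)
Your proposal is correct and follows exactly the paper's own (one-sentence) argument: compare $X'_n\sim\Po{\lambda_n}$ with the companion binomial $\Bin{\lambda_n n^{i+1}}{1/n^{i+1}}$, whose mean is exactly $\lambda_n$, apply the binomial Chernoff bound \eqref{Chernoff}, and transfer the tail probability across the total variation gap $o(1/n^{i})$. Your treatment of the non-integrality of $\lambda_n n^{i+1}$ is a detail the paper silently ignores, and your handling of it is sound.
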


\subsection{The coupling Lemma}

Now we will prove Lemma~\ref{LematCoupling}, which implies  part (ii) and (ii') of the theorems presented in Section~\ref{SectionResult}.

\begin{proof}[Proof of Lemma~\ref{LematCoupling}]
Let $w\in\W$. Denote by $V_w$ the set of vertices, which have chosen feature $w$ and $X_w=|V_w|$. Let  $\mathcal{G}[V_w]$ be a graph with the vertex set~$\V$ and an edge set constituted of these edges which have both ends in $V_w$. It is simple to construct a coupling $(\G{\lfloor X_w/2\rfloor},\mathcal{G}[V_w])$ which implies 
$$
\G{\lfloor X_w/2\rfloor}\coupling\mathcal{G}[V_w].
$$
Namely, given value of $X_w$, first we generate an instance $G$ of $\G{\lfloor X_w/2\rfloor}$. Let $Y_w$ be the number of non--isolated vertices in $G$. By definition $Y_w$ is at most $X_w$, therefore $V_w$ may be chosen to be a sum of the set of non--isolated vertices in $G$ and $X_w-Y_w$ vertices chosen uniformly at random from the remaining vertices. 

Therefore, since graphs $\G{\lfloor X_w/2\rfloor}$, $w\in\W$, are independent and $\mathcal{G}[V_w]$, $w\in\W$, are independent,  by Fact~\ref{FaktCouplingIndependent} and definition of $\Gnm{p}$  
\begin{equation}\label{RownanieGwGnmp}
\bigcup_{w\in\W}\G{\lfloor X_w/2\rfloor}\coupling \bigcup_{w\in\W}\mathcal{G}[V_w]=\Gnm{p}
\end{equation}
and by Fact~\ref{FaktSumaGgwiazdka}
\begin{equation}\label{RownanieGgwiazdkaGw}
\G{{\textstyle \sum_{w\in\W}\lfloor X_w/2\rfloor}}=\bigcup_{w\in\W}\G{\lfloor X_w/2\rfloor}.
\end{equation}

\noindent Now consider two cases

\noindent
{\bf CASE 1: $np=o(1)$.}\\
Notice that
\begin{equation*}
\sum_{w\in\W} \mathbb{I}_w\prec \sum_{w\in\W}\lfloor X_w/2\rfloor, 
\end{equation*}
where
$$
\mathbb{I}_w=
\begin{cases}
1,&\text{ if } X_w\ge 2;\\
0,&\text{otherwise}. 
\end{cases}
$$
The random variable $Z_1=\sum_{w\in\W} \mathbb{I}_w$ has binomial distribution $\Bin{m}{q}$, where $q=\Pra{X_w\ge 2}$, therefore by Fact~\ref{FaktCouplingGgwiazdka}(ii)
\begin{equation}\label{Rownanienpmale1}
\G{\Bin{m}{q}}\coupling\G{{\textstyle \sum_{w\in\W}\lfloor X_w/2\rfloor}}. 
\end{equation}
By Fact~\ref{FaktDtv}
\begin{equation}\label{Rownanienpmale2}
\dtv{\G{\Bin{m}{q}}}{\Gn{1-\exp(-mq/{\textstyle \binom{n}{2}})}}. 
\end{equation}
Moreover $q\ge \Pra{X_w=2} = \binom{n}{2}p^{2}(1-p)^{n-2}$ and $1-\exp(-x)\ge x - x^2/2$ for $x < 1$, thus $1-\exp(-mq/{\textstyle \binom{n}{2}})\ge mp^2\left(1-(n-2)p-\frac{mp^2}{2}\right)$. Therefore by Fact~\ref{FaktCouplingGnp}
\begin{equation}\label{Rownanienpmale3}
\Gn{p_-}\coupling \Gn{1-\exp(-mq/{\textstyle \binom{n}{2}})}. 
\end{equation}
Equations \eqref{RownanieGwGnmp}, \eqref{RownanieGgwiazdkaGw}, \eqref{Rownanienpmale1}, \eqref{Rownanienpmale2} and \eqref{Rownanienpmale3} combined with Facts~\ref{FaktCouplingWlasnosci} and~\ref{FaktDtvWlasnosci} imply the result. 

\noindent
{\bf CASE 2: $np\to \infty$.}\\
 Notice that
\begin{equation*}
\frac{Z_2}{2}-m
\prec \sum_{w\in\W}\lfloor X_w/2\rfloor, 
\end{equation*}
where $Z_2=\sum_{w\in\W}X_w$ has binomial distribution $\Bin{nm}{p}$. By Fact~\ref{FaktCouplingGgwiazdka}(ii)
\begin{equation}\label{Rownanienpduze1}
\G{\frac{Z_2}{2}-m}\coupling\G{{\textstyle \sum_{w\in\W}\lfloor X_w/2\rfloor}}. 
\end{equation}
By Chernoff bound \eqref{Chernoff} for any function $\omega\to \infty$, $\omega=o(\sqrt{nmp})$
\begin{multline*}
\Pra{\frac{Z_2}{2}-m\le \frac{nmp}{2}\left(1-\frac{\omega}{2\sqrt{nmp}}-\frac{2}{np}\right)}=\\
=\Pra{Z_2\le nmp-\frac{\omega\sqrt{mnp}}{2}}=o(1).
\end{multline*}
Moreover, by \eqref{ChernoffPoisson} for the random variable $Z_3$ with Poisson distribution \linebreak
 $\Po{\frac{nmp}{2}\left(1-\frac{\omega}{\sqrt{nmp}}-\frac{2}{np}\right)}$ we have
\begin{multline*}
\Pra{Z_3\ge \frac{nmp}{2}\left(1-\frac{\omega}{2\sqrt{nmp}}-\frac{2}{np}\right)}
=\\=
\Pra{Z_3\ge \mathbb{E} Z_3 + \frac{\omega\sqrt{nmp}}{4}}=o(1).
\end{multline*}
Therefore by Fact~\ref{FaktCouplingGgwiazdka}(i) used twice
\begin{equation}\label{Rownanienpduze2}
\G{\Po{{\textstyle \frac{nmp}{2}\left(1-\frac{\omega}{\sqrt{nmp}}-\frac{2}{np}\right)}}}\coup \G{\frac{Z_2}{2}-m}. 
\end{equation}
By \eqref{RownanieGnGgwiazdka} 
\begin{multline}\label{Rownanienpduze3}
\Gn{1-\exp\left(-{\textstyle \frac{mp}{n-1}\left(1-\frac{\omega}{\sqrt{nmp}}-\frac{2}{np}\right)}\right)}
=\\
=\G{\Po{{\textstyle \frac{nmp}{2}\left(1-\frac{\omega}{\sqrt{nmp}}-\frac{2}{np}\right)}}}.
\end{multline}
By Fact~\ref{FaktCouplingGnp} and 
$$
1-\exp\left(-{\textstyle \frac{mp}{n-1}\left(1-\frac{\omega}{\sqrt{nmp}}-\frac{2}{np}\right)}\right)
\ge 
\textstyle \frac{mp}{n}\left(1-\frac{\omega}{\sqrt{nmp}}-\frac{2}{np}-\frac{mp}{2n}\right)
$$
we have
\begin{equation}\label{Rownanienpduze4}
\Gn{p_-}\coupling \Gn{1-\exp\left(-{\textstyle \frac{mp}{n-1}\left(1-\frac{\omega}{\sqrt{nmp}}-\frac{2}{np}\right)}\right)}.
\end{equation}
Equations \eqref{RownanieGwGnmp}, \eqref{RownanieGgwiazdkaGw}, \eqref{Rownanienpduze1}, \eqref{Rownanienpduze2}, \eqref{Rownanienpduze3}  and \eqref{Rownanienpduze4} combined with Fact~\ref{FaktCouplingWlasnosci}  imply the result. 
\end{proof}

\section*{Acknowledgements}

I would like to thank colleagues attending our seminar for their helpful remarks, which allowed me to improve the layout of the paper and remove some ambiguities. 

\bibliographystyle{plain}
\normalfont
\bibliography{RIGCoupling}

\end{document}